\newcommand{\RR}{\mathbb{R}}
\newcommand{\QQ}{\mathbb{Q}}
\newcommand{\ZZ}{\mathbb{Z}}
\newtheorem{theorem}{Theorem}[section]
\newtheorem{lemma}[theorem]{Lemma}
\newtheorem{proposition}[theorem]{Proposition}
\newtheorem{corollary}[theorem]{Corollary}
\newcommand{\p}{\partial}
\renewcommand{\a}{\alpha}
\renewcommand{\d}{\delta}
\newcommand{\D}{\Delta}
\newcommand{\G}{\Gamma}
\renewcommand{\L}{\Lambda}
\begin{document}
\DeclareGraphicsExtensions{.jpg,.pdf,.mps,.png}
\title{Geometric-Arithmetic index and line graph}

\author[Domingo Pestana]{Domingo Pestana$^{(1)}$}
\address{Departamento de Matem\'aticas, Universidad Carlos III de Madrid,
Avenida de la Universidad 30, 28911 Legan\'es, Madrid, Spain}
\email{dompes@math.uc3m.es}
\thanks{$^{(1)}$ Supported in part by two grants from Ministerio de Econom{\'\i}a y Competitividad, Agencia Estatal de
Investigación (AEI) and Fondo Europeo de Desarrollo Regional (FEDER) (MTM2016-78227-C2-1-P and MTM2015-69323-REDT), Spain.}

\author[Jos\'e M. Sigarreta]{Jos\'e M. Sigarreta$^{(1)(2)}$}
\address{Facultad de Matem\'aticas, Universidad Aut\'onoma de Guerrero,
Carlos E. Adame No.54 Col. Garita, 39650 Acalpulco Gro., Mexico}
\email{jsmathguerrero@gmail.com}
\thanks{$^{(2)}$ Supported in part by a grant from CONACYT (FOMIX-CONACyT-UAGro 249818), M\'exico.}

\author[Eva Tour{\'\i}s]{Eva Tour{\'\i}s$^{(1)}$}
\address{Departamento de Matem\'aticas, Facultad de Ciencias, Universidad Aut\'onoma de Madrid, Campus de Cantoblanco, 28049 Madrid, Spain}
\email{eva.touris@uam.es}

\date{\today}

\maketitle{}


\begin{abstract}
The concept of geometric-arithmetic index was introduced in the
chemical graph theory recently, but it has shown to be useful.
The aim of this paper is to obtain new inequalities involving the geometric-arithmetic index $GA_1$
and characterize graphs extremal with respect to them.
Besides, we prove inequalities involving the geometric-arithmetic index of line graphs.
\end{abstract}

{\it Keywords:  Geometric-arithmetic index, Harmonic index, Vertex-degree-based topological index, Line graph.}

{\it 2010 AMS Subject Classification numbers: 05C07, 92E10.} 

\section{Introduction}

A single number, representing a chemical structure in graph-theoretical terms via the
molecular graph, is called a topological descriptor and if it in addition correlates with
a molecular property it is called topological index, which is used to understand physicochemical
properties of chemical compounds.
Topological indices are interesting since they capture some of the properties of a molecule in a single number.
Hundreds of topological indices have been introduced and studied, starting with the
seminal work by Wiener in which he used the sum of all shortest-path distances of
a (molecular) graph for modeling physical properties of alkanes. 

Topological indices based on end-vertex degrees of edges have been
used over 40 years. Among them, several indices are recognized to be useful tools in
chemical researches. Probably, the best know such descriptor is the Randi\'c connectivity
index ($R$). 

Two of the main successors of the Randi\'c index are the first and second Zagreb indices,
denoted by $M_1$ and $M_2$, respectively, and introduced by Gutman and Trinajsti\'c in $1972$ (see \cite{GT}).
They are defined as
$$
M_1(G) = \sum_{u\in V(G)} d_u^2,
\qquad
M_2(G) = \sum_{uv\in E(G)} d_u d_v ,
\qquad
$$
where $uv$ denotes the edge of the graph $G$ connecting the vertices $u$ and $v$, and
$d_u$ is the degree of the vertex $u$.

There is a vast amount of research on the Zagreb indices.
For details of their chemical applications and mathematical theory see \cite{Gutman}, \cite{GD}, 
and the references therein.

In \cite{LZheng}, \cite{LZhao}, \cite{MN}, the \emph{first and second variable Zagreb indices} are defined as
$$
M_1^{\a}(G) = \sum_{u\in V(G)} d_u^{\a},
\qquad
M_2^{\a}(G) = \sum_{uv\in E(G)} (d_u d_v)^\a ,
$$
with $\a \in \RR$.

The concept of variable molecular descriptors was proposed as a new way of
characterizing heteroatoms in molecules (see \cite{R2}), 
but also to assess the structural differences (e.g.,
the relative role of carbon atoms of acyclic and cyclic parts in alkylcycloalkanes \cite{RPL}).
The idea behind the variable molecular descriptors is that the variables are determined during the
regression so that the standard error of estimate for a particular studied property is as small as possible (see, e.g., \cite{MN}).

In the paper of Gutman and To\v{s}ovi\'c \cite{Gutman8}, the correlation abilities of $20$ vertex-degree-based topological indices
occurring in the chemical literature were tested for the case of standard
heats of formation and normal boiling points of octane isomers.
It is remarkable to realize that the second variable Zagreb index $M_2^\alpha$
with exponent $\alpha = -1$ (and to a lesser extent with exponent $\alpha = -2$)
performs significantly better than the Randi\'c index ($R=M_2^{-1/2}$).

The second variable Zagreb index is used in the structure-boiling point modeling of benzenoid hydrocarbons \cite{NMTJ}.
Also, variable Zagreb indices exhibit a potential applicability for deriving multi-linear regression models. 
Various properties and relations of these indices are discussed in several papers (see, e.g., \cite{AP}, \cite{LZhao}, \cite{LL}, \cite{SDGM}). 

Note that $M_1^{2}$ is the first Zagreb index $M_1$, $M_1^{-1}$ is the inverse index $ID$, 
$M_1^{3}$ is the forgotten index $F$, etc.; 
also, $M_2^{-1/2}$ is the usual Randi\'c index, $M_2^{1}$ is the second Zagreb index $M_2$,
$M_2^{-1}$ is the modified Zagreb index, etc. 


The \emph{general sum-connectivity index} was defined by Zhou and Trinajsti\'c in \cite{ZT2} as
$$
\chi_{\a}(G) = \sum_{uv\in E(G)} (d_u+ d_v)^\a .
$$
Note that $\chi_{_{1}}$ is the first Zagreb index $M_1$, $2\chi_{_{-1}}$ is the harmonic index $H$,
$\chi_{_{-1/2}}$ is the sum-connectivity index $\chi$, etc.

The first geometric-arithmetic index $GA_1$ is defined in \cite{VF} as
$$
GA_1 = GA_1(G) = \sum_{uv\in E(G)}\frac{\sqrt{d_u d_v}}{\frac12 (d_u + d_v)} \,.
$$
Although $GA_1$ was introduced in $2009$, there are many papers dealing with this index
(see, e.g., \cite{D}, \cite{DGF}, \cite{DGF2}, \cite{MH}, \cite{RS3}, 
\cite{VF} and the references therein).
There are other geometric-arithmetic indices, like $Z_{p,q}$ ($Z_{0,1} = GA_1$), but the results in \cite[p.598]{DGF}
show that the $GA_1$ index gathers the
same information on observed molecules as other $Z_{p,q}$ indices.

Although only about 1000 benzenoid hydrocarbons are known, the number of
possible benzenoid hydrocarbons is huge. For instance, the number of
possible benzenoid hydrocarbons with 35 benzene rings is $5.85\cdot 10^{21}$ \cite{VGJ}.
Therefore, modeling their physico-chemical properties is important in order
to predict properties of currently unknown species.
The predicting ability of the $GA_1$ index compared with
Randi\'c index is reasonably better (see \cite[Table 1]{DGF}).
The graphic in \cite[Fig.7]{DGF} (from \cite[Table 2]{DGF}, \cite{TRC}) shows
that there exists a good linear correlation between $GA_1$ and the heat of formation of benzenoid hydrocarbons
(the correlation coefficient is equal to $0.972$).

Furthermore, the improvement in
prediction with $GA_1$ index comparing to Randi\'c index in the case of standard
enthalpy of vaporization is more than 9$\%$. That is why one can think that $GA_1$ index
should be considered in the QSPR/QSAR researches.

Line graphs were initially introduced in the papers \cite{W} and \cite{Kr},
although the terminology of line graph was used in \cite{HN} for the first time.
They are an active topic of research at this moment.
Some topological indices of line graphs have been considered previously in \cite{NKMT}, \cite{RLC} and \cite{SX}.
The \emph{line graph} $\mathcal{L}(G)$ of $G$ is a graph whose vertices are the edges of $G$, and two vertices and are two vertices are incident if and only if they have a common end vertex in $G$.

A main topic in the study of topological indices is to find bounds of the indices involving several parameters.
The aim of this paper is to obtain new inequalities involving the geometric-arithmetic index $GA_1$
and characterize graphs extremal with respect to them.
Besides, we prove inequalities involving the geometric-arithmetic index of line graphs.

Throughout this work, $G=(V (G),E (G))$ denotes a (non-oriented) finite simple (without multiple edges and loops) graph such that each connected connected component of $G$ has at least an edge.
Given a graph $G$ and $v\in V(G)$, we denote by $N(v)$ the set of \emph{neighbors} of $v$, i.e.,
$N(v)=\{u\in V(G)|\, uv\in E(G) \}$.
We denote by $\D,\d,n,m$ the maximum degree, the minimum degree and the cardinality of the set of vertices and edges of $G$, respectively.
Also, we denote by $\D_{\mathcal{L}(G)},\d_{\mathcal{L}(G)},n_{\mathcal{L}(G)},m_{\mathcal{L}(G)}$ the maximum degree, the minimum degree and the cardinality of the set of vertices and edges of the line graph $\mathcal{L}(G)$ of $G$, respectively.
By $G_1 \approx G_2$, we mean that the graphs $G_1$ and $G_2$ are isomorphic.
We say that a graph $G$ is \emph{non-trivial} if each connected connected component of $G$ has at least two edges.
Since $\mathcal{L}(P_2)$ is a single vertex without edges,
in order to work with line graphs we just consider non-trivial graphs.

\section{Small values of the Geometric-Arithmetic index}

To obtain the firsts minimum and maximum values of the geometric-arithmetic index
of some classes of graphs
is an interesting topic (see, e.g., \cite{DEB}, \cite{DZT}, \cite{HHD}).
In this section, we find the graphs having the firsts minimum values of this index in Theorem \ref{t:line3}.
We need first some technical results.

\medskip

Recall that a $(\D,\d)$-\emph{biregular graph} (or simply a biregular graph) is a bipartite graph for which any vertex in one side of the given bipartition has degree $\D$
and any vertex in the other side of the bipartition has degree $\d$.

In \cite{D} (see also \cite[p.609-610]{DGF}) we find the following bounds.

\begin{proposition} \label{p:0}
If $G$ is a graph with $m$ edges, maximum degree $\D$ and minimum degree $\d$, then
\begin{equation} 
\frac{2m\sqrt{\D\d}}{\D + \d} \le GA_1(G) \le m .
\end{equation}
The equality in the first inequality is attained if and only if $G$ is regular or biregular.
The equality in the second inequality is attained if and only if $G$ is regular.
\end{proposition}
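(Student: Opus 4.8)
The plan is to prove both inequalities by analyzing the single‑edge contribution $f(d_u,d_v)=\dfrac{\sqrt{d_ud_v}}{\frac12(d_u+d_v)}=\dfrac{2\sqrt{d_ud_v}}{d_u+d_v}$ as a function of the two end‑degrees. First I would record the elementary fact that for positive reals $x,y$ the AM–GM inequality gives $\dfrac{2\sqrt{xy}}{x+y}\le 1$, with equality iff $x=y$; summing this over all edges $uv\in E(G)$ immediately yields $GA_1(G)\le m$, and equality holds iff every edge has $d_u=d_v$, i.e. (since each connected component has an edge) $G$ is regular. That disposes of the upper bound and its equality case.

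For the lower bound I would show that, under the constraint $\d\le d_u,d_v\le\D$, the quantity $f(d_u,d_v)$ is minimized exactly when $\{d_u,d_v\}=\{\d,\D\}$, giving the per‑edge bound $f(d_u,d_v)\ge \dfrac{2\sqrt{\D\d}}{\D+\d}$. The clean way to see this is to substitute $t=\sqrt{d_u/d_v}\in[\sqrt{\d/\D},\sqrt{\D/\d}]$ and write $f=\dfrac{2t}{t^2+1}$; this function is increasing on $(0,1]$ and decreasing on $[1,\infty)$, so on the interval $[\sqrt{\d/\D},\sqrt{\D/\d}]$ (symmetric about $1$ under $t\mapsto 1/t$, and $f(t)=f(1/t)$) it attains its minimum at the endpoints, where $t^2=\D/\d$ or $\d/\D$, giving value $\dfrac{2\sqrt{\D\d}}{\D+\d}$. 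Summing $f(d_u,d_v)\ge \dfrac{2\sqrt{\D\d}}{\D+\d}$ over the $m$ edges gives $GA_1(G)\ge \dfrac{2m\sqrt{\D\d}}{\D+\d}$.

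For the equality discussion in the first inequality, equality forces $f(d_u,d_v)=\dfrac{2\sqrt{\D\d}}{\D+\d}$ for every edge $uv$; by the strict monotonicity just described, for each edge this means either $d_u=d_v=\D=\d$ (the regular case) or $\{d_u,d_v\}=\{\d,\D\}$ with $\d<\D$. In the latter situation every edge joins a vertex of degree $\d$ to a vertex of degree $\D$, so no two vertices of the same degree are adjacent; hence the vertex set splits into the degree‑$\d$ vertices and the degree‑$\D$ vertices with all edges between the two parts — i.e. $G$ is bipartite, and each part is homogeneous in degree, so $G$ is $(\D,\d)$‑biregular. Conversely a regular or biregular graph clearly achieves equality. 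The only genuinely delicate point is the bookkeeping in this equality analysis (combining "every edge is $(\d,\D)$" with the definition of biregular, and handling disconnected graphs componentwise), but it is routine; the analytic heart — the one‑variable monotonicity of $t\mapsto 2t/(1+t^2)$ — is straightforward.
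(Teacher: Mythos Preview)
Your proof is correct. Note that the paper does not actually supply its own proof of this proposition --- it is quoted as a known result from \cite{D} (see also \cite{DGF}) --- so there is no paper proof to compare against directly. That said, your argument is exactly in the spirit of the surrounding material: the one-variable analysis of $t\mapsto 2t/(1+t^2)$ that you use for the lower bound is precisely what the paper records separately as Lemma~\ref{l:t}, and your per-edge conclusion $\dfrac{2\sqrt{\D\d}}{\D+\d}\le \dfrac{2\sqrt{d_ud_v}}{d_u+d_v}\le 1$ with its equality characterization is the content of Corollary~\ref{c:t}. One tiny point worth making explicit in your equality discussion for the lower bound: from $d_u/d_v=\D/\d$ you still need the constraint $\d\le d_u,d_v\le\D$ to force $\{d_u,d_v\}=\{\d,\D\}$ (the ratio alone does not pin down the values); this is immediate, and Corollary~\ref{c:t} states it, but your write-up jumps over it.
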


In \cite[Lemma 2.2 and Corollary 2.3]{RS2} we find the following results.

\begin{lemma} \label{l:t}
Let $f$ be the function $f(t)=\frac{2t}{1+t^2}$ on the interval $[0,\infty)$. Then $f$
strictly increases in $[0,1]$, strictly decreases in $[1,\infty)$, $f(t)=1$ if and only if $t=1$ and $f(t)=f(t_0)$ if and only if either $t=t_0$ or $t=t_0^{-1}$.
\end{lemma}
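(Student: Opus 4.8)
The plan is to study the function $f(t)=\frac{2t}{1+t^2}$ on $[0,\infty)$ by elementary calculus. First I would compute the derivative: $f'(t)=\frac{2(1+t^2)-2t\cdot 2t}{(1+t^2)^2}=\frac{2(1-t^2)}{(1+t^2)^2}$. Since the denominator is strictly positive, the sign of $f'(t)$ coincides with the sign of $1-t^2$, which is positive on $[0,1)$, zero at $t=1$, and negative on $(1,\infty)$. This immediately yields that $f$ strictly increases on $[0,1]$ and strictly decreases on $[1,\infty)$, with a unique global maximum at $t=1$ where $f(1)=\frac{2}{2}=1$.

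Next I would handle the two equality/injectivity claims. For ``$f(t)=1$ if and only if $t=1$'': the equation $\frac{2t}{1+t^2}=1$ is equivalent to $1+t^2=2t$, i.e., $(t-1)^2=0$, so $t=1$ is the only solution. For the level-set description, suppose $f(t)=f(t_0)$ with $t,t_0\in[0,\infty)$; writing this as $\frac{2t}{1+t^2}=\frac{2t_0}{1+t_0^2}$ and cross-multiplying gives $t(1+t_0^2)=t_0(1+t^2)$, hence $t-t_0 = t t_0^2 - t_0 t^2 = t t_0(t_0-t)$, i.e., $(t-t_0)(1+tt_0)=0$. Since $1+tt_0>0$ when the values are positive (the case $t=0$ forces $t_0=0$ trivially), we get $t=t_0$; alternatively, observing directly that $f(1/s)=\frac{2/s}{1+1/s^2}=\frac{2s}{s^2+1}=f(s)$ shows $t=t_0^{-1}$ also produces the same value, and combining the monotonicity on each of the two intervals $[0,1]$ and $[1,\infty)$ with this symmetry gives that these are the only two possibilities.

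There is no real obstacle here; the only mild subtlety is bookkeeping the endpoint $t=0$ (where $f(0)=0$ has only the preimage $t_0=0$, consistent with the convention that $0^{-1}$ is not in the domain) and making sure the ``if and only if'' in the last clause is read correctly: for a generic value $f(t_0)$ with $t_0\neq 1$ there are exactly two preimages $t_0$ and $t_0^{-1}$, while for $t_0=1$ the two coincide. I would present the derivative computation first, then read off monotonicity, then dispatch the two algebraic equivalences by the factorizations above.
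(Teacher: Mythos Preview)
Your approach is the natural elementary one; note that the paper does not actually prove this lemma but cites it from \cite{RS2}, so there is no in-paper proof to compare against.

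There is, however, a sign slip in your algebraic treatment of the level sets. From $t(1+t_0^2)=t_0(1+t^2)$ one obtains $t-t_0 = t_0 t^2 - t t_0^2 = tt_0(t-t_0)$, hence $(t-t_0)(1-tt_0)=0$, not $(t-t_0)(1+tt_0)=0$ as you wrote. With the correct factorization both solutions $t=t_0$ and $t=t_0^{-1}$ drop out immediately, whereas your version (since indeed $1+tt_0>0$) would wrongly force $t=t_0$ and miss the second preimage. Fortunately your alternative argument---the identity $f(1/s)=f(s)$ together with strict monotonicity on each of $[0,1]$ and $[1,\infty)$---is correct and already suffices on its own, so the proof stands once you either fix the factorization or simply drop the faulty algebraic route in favor of the symmetry-plus-monotonicity argument.
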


\begin{corollary} \label{c:t}
Let $g$ be the function $g(x,y)=\frac{2\sqrt{xy}}{x + y}$ with $0<a\le x,y \le b$. Then
$$
\frac{2\sqrt{ab}}{a + b} \le g(x,y) \le 1.
$$
The equality in the lower bound is attained if and only if either $x=a$ and $y=b$, or $x=b$ and $y=a$,
and the equality in the upper bound is attained if and only if $x=y$.
Besides, $g(x,y)= g(x',y')$ if and only if $x/y$ is equal to either $x'/y'$ or $y'/x'$.
\end{corollary}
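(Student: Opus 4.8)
The plan is to reduce the whole statement to Lemma \ref{l:t} by the substitution $t=\sqrt{x/y}$. First I would record the algebraic identity
$$
g(x,y)=\frac{2\sqrt{xy}}{x+y}=\frac{2t}{1+t^2}=f(t),\qquad t=\sqrt{x/y},
$$
which holds because $1+t^2=(x+y)/y$ and $2t=2\sqrt{x/y}$, so $f(t)=2\sqrt{x/y}\cdot y/(x+y)=2\sqrt{xy}/(x+y)$. Under the hypothesis $0<a\le x,y\le b$ the ratio $x/y$ ranges over $[a/b,\,b/a]$, hence $t$ ranges over the interval $I=[\sqrt{a/b},\,\sqrt{b/a}]$, which contains the point $1$.

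For the upper bound, Lemma \ref{l:t} gives $f(t)\le 1$ for every $t\ge 0$, with equality if and only if $t=1$; translating back, $g(x,y)\le 1$ with equality if and only if $\sqrt{x/y}=1$, i.e. $x=y$. For the lower bound, Lemma \ref{l:t} tells us $f$ increases on $[0,1]$ and decreases on $[1,\infty)$, so on the interval $I$ the minimum of $f$ is attained at an endpoint of $I$; moreover the symmetry statement in Lemma \ref{l:t} ($f(t_0)=f(t_0^{-1})$) shows the two endpoints $\sqrt{a/b}$ and $\sqrt{b/a}$ give the same value, namely $f(\sqrt{a/b})=2\sqrt{ab}/(a+b)$. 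Hence $g(x,y)\ge 2\sqrt{ab}/(a+b)$, with equality exactly when $t\in\{\sqrt{a/b},\sqrt{b/a}\}$, that is, when $x/y\in\{a/b,\,b/a\}$.

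It then remains to pass from these ratio equalities back to equalities of the variables: if $x/y=a/b$ then $bx=ay$, and combining with $x\ge a$ and $y\le b$ forces $x=a$ and $y=b$; symmetrically $x/y=b/a$ forces $x=b$, $y=a$. This gives the stated characterization of equality in the lower bound. Finally, the \emph{``Besides''} clause is immediate from the last assertion of Lemma \ref{l:t}: $g(x,y)=g(x',y')$ iff $f(\sqrt{x/y})=f(\sqrt{x'/y'})$ iff $\sqrt{x/y}$ equals $\sqrt{x'/y'}$ or its reciprocal $\sqrt{y'/x'}$, i.e. iff $x/y$ equals $x'/y'$ or $y'/x'$. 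There is no real obstacle here; the only thing to be careful about is the endpoint analysis of $f$ on $I$ and the elementary deduction $x/y=a/b \Rightarrow (x,y)=(a,b)$, both of which are routine.
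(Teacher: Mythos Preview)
The paper does not actually supply a proof of this corollary: it is quoted from \cite{RS2} (see the line ``In \cite[Lemma 2.2 and Corollary 2.3]{RS2} we find the following results''). Your argument is correct and is precisely the natural deduction of the corollary from Lemma~\ref{l:t}: the substitution $t=\sqrt{x/y}$ turns $g$ into $f$, the monotonicity of $f$ on $[0,1]$ and $[1,\infty)$ together with the symmetry $f(t_0)=f(t_0^{-1})$ pin the minimum at the endpoints of $[\sqrt{a/b},\sqrt{b/a}]$, and your endpoint analysis ($x/y=a/b$ with $a\le x$, $y\le b$ forces $x=a$, $y=b$) is the right way to recover the equality characterization. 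There is nothing to correct.
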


\begin{proposition} \label{p:nostar}
If $G$ is a graph with $n$ vertices, $m$ edges and maximum degree $\D$ such that $\D \le n-2$,
then
$$
GA_1(G) > \frac{2m\sqrt{n-2}}{n-1}  \ge 2 \, \sqrt{n-2}\, .
$$
\end{proposition}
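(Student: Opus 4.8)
The plan is to derive the first inequality from the lower bound in Proposition~\ref{p:0} combined with the strict monotonicity of the function $f(t)=\frac{2t}{1+t^2}$ from Lemma~\ref{l:t}, and then to upgrade the resulting inequality to a strict one by showing that the two pertinent equality cases cannot hold simultaneously under the hypothesis $\D\le n-2$. The second inequality $\frac{2m\sqrt{n-2}}{n-1}\ge 2\sqrt{n-2}$ is nothing but $m\ge n-1$, which holds for a connected graph since it contains a spanning tree, so I would record it at the very end. (One could instead bound $GA_1$ term by term using Corollary~\ref{c:t} with $a=1$, $b=\D$, but then the equality analysis is messier, since one has to rule out disjoint unions of stars $K_{1,\D}$; the route through Proposition~\ref{p:0} is cleaner.)

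For the main bound, observe that $\d\ge 1$ because each connected component of $G$ has an edge; together with $\D\le n-2$ this gives the chain $1\le\sqrt{\D/\d}\le\sqrt{\D}\le\sqrt{n-2}$. Since $\frac{2\sqrt{\D\d}}{\D+\d}=f\big(\sqrt{\D/\d}\big)$ and $\frac{2\sqrt{n-2}}{n-1}=f\big(\sqrt{n-2}\big)$, and $f$ strictly decreases on $[1,\infty)$ by Lemma~\ref{l:t}, we obtain
$$
\frac{2\sqrt{\D\d}}{\D+\d}=f\big(\sqrt{\D/\d}\big)\ge f\big(\sqrt{n-2}\big)=\frac{2\sqrt{n-2}}{n-1},
$$
with equality if and only if $\D=(n-2)\d$. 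Multiplying by $m$ and using the lower bound $GA_1(G)\ge\frac{2m\sqrt{\D\d}}{\D+\d}$ of Proposition~\ref{p:0} then yields $GA_1(G)\ge\frac{2m\sqrt{n-2}}{n-1}$.

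To make this strict, assume $GA_1(G)=\frac{2m\sqrt{n-2}}{n-1}$. Then both inequalities just used are equalities, so $G$ is regular or biregular (the equality case of Proposition~\ref{p:0}) and also $\D=(n-2)\d$. If $G$ is regular then $\D=\d$, whence $n-2=1$, i.e., $n=3$ and $\D=\d=1$; but a $1$-regular graph has an even number of vertices, a contradiction. If $G$ is $(\D,\d)$-biregular, then $\D=(n-2)\d\le n-2$ forces $\d=1$ and $\D=n-2$; writing $A$ for the side of degree-$\D$ vertices and $B$ for the side of degree-$1$ vertices and counting edges from each side gives $|A|(n-2)=m=|B|$ and $|A|+|B|=n$, hence $|A|(n-1)=n$, which is impossible because $1<\frac{n}{n-1}<2$ for $n\ge 3$. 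Hence equality cannot occur and $GA_1(G)>\frac{2m\sqrt{n-2}}{n-1}$. Finally $\frac{2m\sqrt{n-2}}{n-1}=\frac{m}{n-1}\cdot 2\sqrt{n-2}\ge 2\sqrt{n-2}$ since $m\ge n-1$.

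The only genuinely delicate point is the strictness: the displayed chain of inequalities is non-strict, and one must check that "$G$ regular or biregular" and "$\D=(n-2)\d$" are incompatible when $\D\le n-2$. This is settled by a parity obstruction in the regular case and by the divisibility obstruction $(n-1)\mid n$ arising from the bipartite degree count. The passage to $2\sqrt{n-2}$ is routine once one notes $m\ge n-1$, which is where connectivity of $G$ is used.
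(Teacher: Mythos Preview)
Your proof is correct and follows essentially the paper's approach: apply Proposition~\ref{p:0}, compare $\frac{2\sqrt{\Delta\delta}}{\Delta+\delta}$ with $\frac{2\sqrt{n-2}}{n-1}$ via the monotonicity in Lemma~\ref{l:t}/Corollary~\ref{c:t}, and then rule out the equality cases. The only differences are minor: for the $(n-2,1)$-biregular elimination you use the double-counting obstruction $|A|(n-1)=n$ whereas the paper exhibits an isolated vertex directly, and you make explicit the use of $m\ge n-1$ (hence connectivity) for the second inequality, which the paper's proof leaves unstated.
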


\begin{proof}
Since $1 \le \d \le \D \le n-2$, Proposition \ref{p:0} and Corollary \ref{c:t} give
$$
GA_1(G)
\ge \frac{2m\sqrt{\D\d}}{\D + \d}
\ge \frac{2m\sqrt{n-2}}{n-1}\, ,
$$
and $GA_1(G) = \frac{2m\sqrt{n-2}}{n-1}$ if and only if $n=3$ and $G$ is $1$-regular or $n \ge 4$ and $G$ is $(n-2,1)$-biregular.
It is clear that there does not exist any $1$-regular graph with $n=3$.
Thus, we just need to prove that there does not exist any $(n-2,1)$-biregular graph.
Seeking for a contradiction assume that $G$ is a $(n-2,1)$-biregular graph.
Let $V(G)=\{v_1,v_2,\dots,v_{n-1},v_n\}$ with $d_{v_1}=n-2$ and $N(v_1)=\{v_2,\dots,v_{n-1}\}$.
Since $G$ is $(n-2,1)$-biregular, $d_{v_j}=1$ for every $2 \le j \le n-1$ and so, $N(v_n)=\emptyset$, a contradiction.
This finishes the proof.
%
\end{proof}

We say that an edge in a graph is \emph{pendant} if one of its endpoints has degree $1$.

In the study of any parameter of graphs it is interesting to determine the graphs for which this parameter has small values.
The following theorem characterizes the graphs with small geometric-arithmetic index.

\begin{theorem} \label{t:line3}
Let $G$ be a connected graph.

$(1)$ If $GA_1(G) \le 1$, then $G$ is isomorphic to $P_2$ and $GA_1(G) = 1$.

$(2)$ If $1 < GA_1(G) \le 2$, then $G$ is isomorphic to $P_3$ and $GA_1(G) = \frac{4\sqrt{2}}3\,$.

$(3)$ If $2 < GA_1(G) \le 3$, then $G$ is isomorphic to either $C_3$, $P_4$ or $S_4$, and $GA_1(G)$ takes the values
$3,\, 1+\frac{4\sqrt{2}}3\,,\, \frac{3\sqrt{3}}2\,$, respectively.

$(4)$ If $3 < GA_1(G) \le 4$, then $G$ is isomorphic to either $S_6$,
$S_5$, $P_5$, $S_{1,2}$,
$C_4$, or $C_3$ with a pendant edge,
and $GA_1(G)$ takes the values
$\frac{5\sqrt{5}}3\,,\,\frac{16}5\,, \, 2 +\frac{4\sqrt{2}}3\,,\, \sqrt{3} + \frac{2\sqrt{2}}{3} + \frac{2\sqrt{6}}{5}\,,
\,4,\, 1+\frac{4\sqrt{6}}5 + \frac{\sqrt{3}}2
\,,$ respectively.
\end{theorem}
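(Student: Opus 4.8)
The plan is to classify all connected graphs $G$ with $GA_1(G) \le 4$ by first bounding the number of edges and then enumerating the finitely many candidates. The key structural tool is Proposition \ref{p:nostar}: if $\D \le n - 2$, then $GA_1(G) > 2\sqrt{n-2}$, so $GA_1(G) \le 4$ forces $2\sqrt{n-2} < 4$, i.e.\ $n \le 5$; and if instead $\D = n-1$ (a graph containing a spanning star, so $G$ has a vertex adjacent to all others), one analyzes these directly. Combining this with the trivial bound $GA_1(G) \le m$ from Proposition \ref{p:0} (each term is at most $1$ by Corollary \ref{c:t}), the hypothesis $GA_1(G) \le 4$ already confines us to graphs with at most a handful of vertices together with the family of graphs with $\D = n-1$ of small size. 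In fact it is cleaner to handle everything by induction on $m$ following the pattern of the partial cases (1)--(3): a connected graph with $m$ edges and $GA_1(G) \le 4$ has, by the lower estimate in Proposition \ref{p:0}, $m \le \frac{(\D+\d) GA_1(G)}{2\sqrt{\D\d}} \le 2(\D+\d)/\sqrt{\D\d}$... but more efficiently one just notes that deleting a pendant edge or using that each edge contributes a definite positive amount bounded below when degrees are bounded gives $m \le 6$.

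Concretely, I would proceed as follows. First, establish that $GA_1(G) \le 4$ implies $m \le 6$: for a connected graph each edge $uv$ contributes $g(d_u,d_v) = \frac{2\sqrt{d_u d_v}}{d_u+d_v}$, and since $G$ is connected with $m$ edges one has $\D \le m$ and $\d \ge 1$, so by Corollary \ref{c:t} each term is at least $\frac{2\sqrt{m}}{m+1}$; hence $GA_1(G) \ge \frac{2m\sqrt{m}}{m+1}$, and $\frac{2m\sqrt m}{m+1} > 4$ once $m \ge 7$ (check: at $m=7$, $\frac{14\sqrt7}{8} = \frac{7\sqrt7}{4} \approx 4.63 > 4$). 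Next, for each $m \in \{4,5,6\}$, enumerate the connected graphs on $m$ edges — there are finitely many, and the vertex count is at most $m+1$ — discard those isomorphic to ones already appearing in cases (1)--(3) (which have $m \le 4$ and $GA_1 \le 3$), compute $GA_1$ for the rest exactly using the degree sequence, and retain exactly those with $3 < GA_1(G) \le 4$. The graphs with $m = 6$ that survive include $S_6$ (the star $K_{1,6}$, but wait — one must recompute: $S_6$ here denotes the star on $6$ edges with degrees $6$ and $1$; its $GA_1 = 6 \cdot \frac{2\sqrt6}{7} = \frac{12\sqrt6}{7}$, hmm, this does not match $\frac{5\sqrt5}{3}$, so the notation $S_k$ in the paper means the star with $k$ edges having $GA_1$ computed with center degree $k$; I would adopt the paper's convention and recompute each listed value to confirm consistency), and similarly handle $S_5$, $P_5$, $S_{1,2}$ (a spider/broom), $C_4$, and $C_3$ with a pendant edge.

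The main obstacle is the completeness of the enumeration: one must be certain that \emph{every} connected graph with $4 \le m \le 6$ and $3 < GA_1 \le 4$ appears on the list, and that nothing with $m \le 3$ sneaks in (these are handled by cases (1)--(3), whose statements show every connected graph with $m \le 3$ has $GA_1 \le 3$, so nothing new). This is a finite but genuinely tedious case check: up to isomorphism there are a moderate number of connected graphs on $5$ edges and more on $6$ edges, and for each one must compute $GA_1$ from the degree sequence. The organizing principle that keeps this manageable is Proposition \ref{p:nostar}: it immediately kills all graphs with $n \ge 6$ unless $\D = n-1$, and the $\D = n-1$ graphs of the relevant size are short to list (stars and stars with one extra edge or a subdivided edge). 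So the proof reduces to (i) the edge bound $m \le 6$, (ii) the $\D = n-1$ sub-case where $n$ can be as large as $7$ (giving $S_6$), and (iii) the finite check over $n \le 5$; assembling these and matching each surviving graph with its computed $GA_1$ value completes the argument.

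Throughout, I would present the verification of the six numerical values $\frac{5\sqrt5}{3}, \frac{16}{5}, 2 + \frac{4\sqrt2}{3}, \sqrt3 + \frac{2\sqrt2}{3} + \frac{2\sqrt6}{5}, 4, 1 + \frac{4\sqrt6}{5} + \frac{\sqrt3}{2}$ explicitly by writing out the edge contributions (e.g.\ $C_4$ is $4$-edged $2$-regular so $GA_1 = 4$; $C_3$ with a pendant edge has degree sequence giving three non-pendant edges among degrees $\{2,2,3\}$ and one pendant edge between degrees $3$ and $1$, yielding $\frac{2\sqrt6}{5}$ three... — I would be careful here), and note in each case why the graph is uniquely determined by requiring connectedness and the computed index to lie in $(3,4]$. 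The induction-on-$m$ framing, if the authors used it for (1)--(3), would let me phrase (4) as: assume $G$ is not one of the listed graphs; then either $G$ contains a proper connected subgraph $G'$ with $GA_1(G') \le 4$ still, forcing $G'$ into an earlier case and constraining $G$, or $G$ has $m \ge 7$, contradiction — but the direct enumeration is likely cleaner to write.
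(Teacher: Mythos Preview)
Your strategy is correct in outline and would work: bound $m$, enumerate the finitely many connected graphs, compute $GA_1$. Your per-edge bound $GA_1(G)\ge \frac{2m^{3/2}}{m+1}$ is valid (since $1\le d_u,d_v\le n-1\le m$ in a connected graph) and in fact already forces $m\le 5$, not $m\le 6$: at $m=6$ one gets $\tfrac{12\sqrt{6}}{7}\approx 4.2>4$. This is consistent once you adopt the paper's convention that $S_n$ is the star on $n$ \emph{vertices}, so $S_6$ has $5$ edges and $GA_1(S_6)=\tfrac{5\sqrt5}{3}$.

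The paper organizes the argument by $n$ rather than by $m$, and this cuts the casework substantially. It first bounds $n$ via $GA_1(G)\ge \frac{2(n-1)^{3/2}}{n}$ (giving $n\le 6$ when $GA_1\le 4$), and then for each fixed $n$ bounds $m$ via Theorem~\ref{t:end}, $GA_1\ge \frac{2m\sqrt{n-1}}{n}$. For $n=4$ this already forces $m\le 4$; for $n=5$ the equality case of Theorem~\ref{t:end} (attained only by stars) rules out $m=5$, so $G$ is a tree and the Vuki\v{c}evi\'c--Furtula inequality $GA_1(S_5)\le GA_1(T)\le GA_1(P_5)$ places all three $5$-vertex trees in $(3,4)$ at once; for $n=6$ Proposition~\ref{p:nostar} leaves only $S_6$. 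Your direct-$m$ route would instead require computing $GA_1$ for $K_4$ minus an edge and for every unicyclic graph on five vertices in order to eliminate them---finite and correct, but the paper's two-step bound together with the cited tree inequality avoids that enumeration entirely.
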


\begin{proof}
If $GA_1(G) \le k$ for some fixed $k$, then \eqref{eq23} and Theorem \ref{t:end} give
\begin{equation} \label{eq01}
\frac{2(n-1)^{3/2}}{n}\le k ,
\qquad
m \le \frac{kn}{2\sqrt{n-1}}\, .
\end{equation}

If $GA_1(G) \le 1$, then the first inequality in \eqref{eq01} implies $n\le 2$.
Hence, $G$ is isomorphic to $P_2$ and $GA_1(G) = 1$.

If $1 < GA_1(G) \le 2$, then the first inequality in \eqref{eq01} implies $n\le 3$.
Since $1 < GA_1(G)$, we have $n= 3$.
Hence, $G$ is isomorphic to either $P_3$ or $C_3$.
Since $GA_1(C_3) = 3$, $G$ is isomorphic to $P_3$ and $GA_1(G) = \frac{4\sqrt{2}}3\,$.

If $2 < GA_1(G) \le 3$, then the first inequality in \eqref{eq01} implies $n\le 4$.
Since $2 < GA_1(G)$, we have that either $G$ is isomorphic to $C_3$ (and $GA_1(C_3) = 3$) or $n= 4$.
If $n=4$, then the second inequality in \eqref{eq01} implies $m\le 3$ and $G$ is a tree.
Hence, $G$ is isomorphic to either $P_4$ or $S_4$, and we have $GA_1(P_4) = 1+\frac{4\sqrt{2}}3$
or $GA_1(S_4)= \frac{3\sqrt{3}}2\,$.

If $3 < GA_1(G) \le 4$, then the first inequality in \eqref{eq01} implies $n\le 6$.

If $n=4$, then the second inequality in \eqref{eq01} implies $m\le 4$.
Since $3 < GA_1(G)$, $G$ is not a tree and $m=4$. Thus,
$G$ is isomorphic to either $C_4$ or $C_3$ with a pendant edge, and
$GA_1(G)$ is equal to
$4,\, 1+\frac{4\sqrt{6}}5 + \frac{\sqrt{3}}2\,$, respectively.

If $n=5$, then the second inequality in \eqref{eq01} implies $m\le 5$.
Seeking for a contradiction assume that $m = 5$.
By Theorem \ref{t:end},
$$
4 \ge GA_1(G) \ge \frac{2m\sqrt{n-1}}{n} = 4,
$$
and then $GA_1(G) = \frac{2m\sqrt{n-1}}{n}$. Since the equality in Theorem \ref{t:end} is attained, $G$ is a star graph
and this contradicts $n=m=5$.
Therefore, $m\le 4$ and $G$ is a tree.
By \cite[Theorem 3]{VF}, we know that $GA_1(S_n) \le GA_1(G) \le GA_1(P_n)$ for every tree $G$ with $n$ vertices.
Hence,
$$
3 < \frac{16}5 = GA_1(S_5) \le GA_1(G) \le GA_1(P_5) = 2 +\frac{4\sqrt{2}}3 < 4
$$
for every tree $G$ with $5$ vertices. Thus,
$G$ is isomorphic to either $S_5$, $P_5$ or the double star graph $S_{1,2}$,
and
$GA_1(G)$ is equal to
$\frac{16}5\,, \, 2 +\frac{4\sqrt{2}}3\,,\, \sqrt{3} + \frac{2\sqrt{2}}{3} + \frac{2\sqrt{6}}{5}\,,$ respectively.

If $n=6$, then the second inequality in \eqref{eq01} implies $m\le 5$ and $G$ is a tree.
We have $GA_1(S_6)= \frac{5\sqrt{5}}3\,$.
If $G$ is not isomorphic to $S_6$,
then $\D \le n-2$ and Proposition \ref{p:nostar} gives
$$
GA_1(G) > 2 \, \sqrt{n-2} = 4 .
$$
Hence, $G$ is isomorphic to $S_6$ and $GA_1(G)= \frac{5\sqrt{5}}3\,$.
\end{proof}

By using Theorem \ref{t:line3}, it is clear that one can obtain a similar result for non-connected graphs.

Also, the following result is a version of Theorem \ref{t:line3} for line graphs.

\begin{proposition} \label{t:line4}
Let $G$ be a non-trivial connected graph.

$(1)$ If $GA_1(\mathcal{L}(G)) \le 1$, then $G$ is isomorphic to $P_3$ and $GA_1(\mathcal{L}(G)) = 1$.

$(2)$ If $1 < GA_1(\mathcal{L}(G)) \le 2$, then $G$ is isomorphic to $P_4$ and $GA_1(\mathcal{L}(G)) = \frac{4\sqrt{2}}3\,$.

$(3)$ If $2 < GA_1(\mathcal{L}(G)) \le 3$, then $G$ is isomorphic to either $C_3$, $S_4$ or $P_5$, and $GA_1(\mathcal{L}(G))$ takes the values
$3,\, 1+\frac{4\sqrt{2}}3\,$.

$(4)$ If $3 < GA_1(\mathcal{L}(G)) \le 4$, then $G$ is isomorphic to either
$P_6$,
$C_4$ or $S_{1,2}$,
and $GA_1(\mathcal{L}(G))$ takes the values
$2 +\frac{4\sqrt{2}}3\,,
\,4,\, 1+\frac{4\sqrt{6}}5 + \frac{\sqrt{3}}2
\,,$ respectively.
\end{proposition}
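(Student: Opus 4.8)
The plan is to reduce the whole statement to Theorem~\ref{t:line3}. Since $G$ is connected and non-trivial it contains two adjacent edges, so the line graph $\mathcal{L}(G)$ is connected and has at least one edge; hence Theorem~\ref{t:line3} applies to $H:=\mathcal{L}(G)$. Thus, for each $k\in\{1,2,3,4\}$, the hypothesis $GA_1(\mathcal{L}(G))\le k$ already forces $H$ to be isomorphic to one of the finitely many graphs listed in the corresponding item of Theorem~\ref{t:line3}, and it pins $GA_1(H)=GA_1(\mathcal{L}(G))$ to the value recorded there. What remains is, for each candidate graph $H$, to determine all connected graphs $G$ with $\mathcal{L}(G)\approx H$.

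For this last step I would use two facts. First, $K_{1,3}$ is not a line graph: in $\mathcal{L}(F)$, if a vertex $e$ had three pairwise non-adjacent neighbours $e_1,e_2,e_3$, then the edge $e$ of $F$ would share one of its two endpoints with each $e_i$, so by the pigeonhole principle two of the $e_i$ would meet at a common endpoint and hence be adjacent, a contradiction; in particular no line graph contains $K_{1,3}$ as an induced subgraph. This eliminates every ``star-type'' candidate produced by Theorem~\ref{t:line3}, namely $S_4=K_{1,3}$ (from item (3)) and $S_5$, $S_6$ and the double star $S_{1,2}$ (from item (4); the degree-$3$ vertex of $S_{1,2}$ together with its three neighbours induces a $K_{1,3}$). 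Second, for the surviving candidates I would invoke Whitney's line-graph isomorphism theorem \cite{W}: two connected graphs with isomorphic line graphs are isomorphic, the only exception being $\mathcal{L}(C_3)\approx\mathcal{L}(K_{1,3})\approx C_3$. For the two smallest candidates one may also argue directly: a connected graph whose line graph is $P_2$ has exactly two, necessarily adjacent, edges, hence is $P_3$; and $\mathcal{L}(P_4)\approx P_3$.

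Carrying this out item by item: in (1), $H\approx P_2$ forces $G\approx P_3$; in (2), $H\approx P_3$ forces $G\approx P_4$; in (3), among $H\in\{C_3,P_4,S_4\}$ the graph $S_4$ is not a line graph, $H\approx P_4$ has unique connected preimage $P_5$, and $H\approx C_3$ has exactly the two connected preimages $C_3$ and $S_4$, so $G\in\{C_3,S_4,P_5\}$; in (4), the candidates for $H$ are $S_6$, $S_5$, $P_5$, $S_{1,2}$, $C_4$ and $C_3$ with a pendant edge, of which the stars $S_5,S_6$ and the double star $S_{1,2}$ are not line graphs, while $\mathcal{L}(P_6)\approx P_5$, $\mathcal{L}(C_4)\approx C_4$ and $\mathcal{L}(S_{1,2})$ is $C_3$ with a pendant edge, each with a unique connected preimage, so $G\in\{P_6,C_4,S_{1,2}\}$. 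In every case the resulting $G$ is connected and non-trivial, and $GA_1(\mathcal{L}(G))=GA_1(H)$ is exactly the number assigned to $H$ in Theorem~\ref{t:line3}; this yields precisely the values stated in items (1)--(4).

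The main difficulty I anticipate is not a single hard estimate but the bookkeeping in items (3) and (4): one has to sort the graphs produced by Theorem~\ref{t:line3} into line graphs and non-line-graphs, compute the one or two connected preimages of each line graph --- remembering the $C_3/K_{1,3}$ exception in Whitney's theorem and checking the tiny cases $P_2$ and $P_3$ directly --- and then match up the $GA_1$-values. Once Theorem~\ref{t:line3}, the elementary fact that $K_{1,3}$ is not a line graph, and Whitney's theorem are available, each individual sub-case is immediate.
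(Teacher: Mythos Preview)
Your proposal is correct and follows essentially the same approach as the paper: apply Theorem~\ref{t:line3} to $\mathcal{L}(G)$, discard the candidates $S_{1,2}$ and $S_n$ ($n\ge 4$) as non-line-graphs, and recover the preimages, noting that $\mathcal{L}(C_3)=\mathcal{L}(S_4)=C_3$. Your version is in fact more thorough than the paper's, which asserts these facts without the pigeonhole argument for $K_{1,3}$ or an explicit appeal to Whitney's uniqueness theorem.
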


\begin{proof}
It suffices to apply Theorem \ref{t:line3}, finding the graphs such their line graphs appear in the statement of Theorem \ref{t:line3}.
Note that there does not exist any graph $G$ with $\mathcal{L}(G)=S_{1,2}$ or $\mathcal{L}(G)=S_n$ for $n\ge 4$.
Also, $\mathcal{L}(C_3)= \mathcal{L}(S_4)=C_3$.
\end{proof}

A natural problem in the study of any topological index is its monotonicity with respect to deletion of edges.

\smallskip

We say that $u_0v_0\in E(G)$ is \emph{minimal} if
$d_{u_0} \le d_{v}$ for every $v \in N(u_0) \setminus \{v_0 \}$ and
$d_{v_0} \le d_{v}$ for every $v \in N(v_0) \setminus \{u_0 \}$.
(Note that every edge $u_0v_0\in E(G)$ with $d_{u_0} = d_{v_0} =\d$ is minimal.)

\begin{theorem} \label{t:edge}
If $u_0v_0$ is a minimal edge in the graph $G$, then
$$
GA_1(G\setminus \{u_0v_0 \}) < GA_1(G) - \frac{2\sqrt{d_{u_0} d_{v_0}}}{d_{u_0} + d_{v_0}} \,.
$$
\end{theorem}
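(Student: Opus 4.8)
The plan is to compare $GA_1(G)$ and $GA_1(G \setminus \{u_0v_0\})$ edge by edge. Removing the edge $u_0v_0$ has two effects: the term $\frac{2\sqrt{d_{u_0}d_{v_0}}}{d_{u_0}+d_{v_0}}$ coming from that edge disappears, and the degrees of $u_0$ and $v_0$ each drop by one, which changes the contribution of every other edge incident to $u_0$ or $v_0$. So I would write
\begin{align*}
GA_1(G) - GA_1(G \setminus \{u_0v_0\})
&= \frac{2\sqrt{d_{u_0}d_{v_0}}}{d_{u_0}+d_{v_0}} \\
&\quad + \sum_{v \in N(u_0)\setminus\{v_0\}} \Bigl( \frac{2\sqrt{d_{u_0}d_v}}{d_{u_0}+d_v} - \frac{2\sqrt{(d_{u_0}-1)d_v}}{d_{u_0}-1+d_v} \Bigr) \\
&\quad + \sum_{v \in N(v_0)\setminus\{u_0\}} \Bigl( \frac{2\sqrt{d_{v_0}d_v}}{d_{v_0}+d_v} - \frac{2\sqrt{(d_{v_0}-1)d_v}}{d_{v_0}-1+d_v} \Bigr),
\end{align*}
where, in the edge case $N(u_0)\setminus\{v_0\} = \emptyset$ or $N(v_0)\setminus\{u_0\} = \emptyset$, one or both sums are empty. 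The claimed inequality is then equivalent to showing the two sums together are strictly positive, i.e. that the total change in the non-removed edges is positive.

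The key technical step is a one-variable monotonicity lemma: for fixed $a \ge 1$ (playing the role of $d_v$) and $s \ge 2$ an integer (playing the role of the degree being decremented), one has $\frac{2\sqrt{sa}}{s+a} - \frac{2\sqrt{(s-1)a}}{s-1+a} > 0$ precisely when $s \le a$, is $<0$ when $s > a$, and $=0$ when... it is never zero for integer drops unless the two ratios $s/a$ and $(s-1)/a$ are reciprocal, which forces $a = s-\tfrac12$, possible only if $a$ is a half-integer. More usefully: by Lemma \ref{l:t}, $g(x,y) = \frac{2\sqrt{xy}}{x+y} = f(\sqrt{x/y})$, so $g(s,a) - g(s-1,a)$ compares $f$ at $\sqrt{s/a}$ versus $\sqrt{(s-1)/a}$. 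Since $f$ increases on $[0,1]$ and decreases on $[1,\infty)$, the sign depends on where $\sqrt{s/a}$ and $\sqrt{(s-1)/a}$ sit relative to $1$. The minimality hypothesis is exactly what makes this work: for $v \in N(u_0)\setminus\{v_0\}$ we have $d_v \ge d_{u_0}$, so with $s = d_{u_0} \le d_v = a$ both arguments $\sqrt{(s-1)/a} < \sqrt{s/a} \le 1$ lie in the increasing regime, hence $g(d_{u_0},d_v) > g(d_{u_0}-1,d_v)$ and that summand is positive; the same argument applies to the $v_0$-sum using $d_v \ge d_{v_0}$.

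The only remaining subtlety is the degenerate case in which \emph{both} $N(u_0)\setminus\{v_0\}$ and $N(v_0)\setminus\{u_0\}$ are empty, i.e. $u_0v_0$ is an isolated edge (a connected component isomorphic to $P_2$); then both sums vanish and the difference equals exactly $\frac{2\sqrt{d_{u_0}d_{v_0}}}{d_{u_0}+d_{v_0}} = 1$, so the inequality $GA_1(G\setminus\{u_0v_0\}) < GA_1(G) - 1$ reads $GA_1(G\setminus\{u_0v_0\}) < 0$, which is false unless we note $GA_1(G\setminus\{u_0v_0\}) = GA_1(G) - 1$, a non-strict equality. I expect this edge case is excluded by the standing hypothesis that every connected component of $G$ has at least an edge together with the way $G \setminus \{u_0v_0\}$ is interpreted, or one simply assumes $G$ has no $P_2$ component; I would state that assumption explicitly. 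Modulo this bookkeeping, the main obstacle is purely the sign analysis of $g(s,a) - g(s-1,a)$, which Lemma \ref{l:t} dispatches cleanly once the minimality hypothesis is invoked to guarantee $d_v \ge d_{u_0}$ (resp. $d_v \ge d_{v_0}$) — so there is essentially no hard part beyond organizing the telescoping decomposition correctly.
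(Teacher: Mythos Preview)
Your approach is essentially identical to the paper's: the same edge-by-edge decomposition, with the monotonicity of $g(x,y)=\frac{2\sqrt{xy}}{x+y}$ (the paper invokes Corollary~\ref{c:t} directly rather than going through Lemma~\ref{l:t}, but the content is the same) combined with the minimality hypothesis $d_v\ge d_{u_0}$ (resp.\ $d_v\ge d_{v_0}$) to make each summand strictly positive. Your observation about the isolated-$P_2$ case is correct and is in fact a gap shared by the paper's own proof: when both $N(u_0)\setminus\{v_0\}$ and $N(v_0)\setminus\{u_0\}$ are empty the two sums vanish and the strict inequality collapses to equality, so the statement as written needs the tacit assumption that $u_0v_0$ is not a $P_2$ component.
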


\begin{proof}
Denote by $d_u'$ the degree of $u \in V(G)=V(G\setminus \{u_0v_0 \})$ in the graph $G\setminus \{u_0v_0 \}$.

If $uv \in E(G)$ and if $\{u,v\}\cap \{u_0,v_0\}=\emptyset$, then $d_u'=d_u$, $d_v'=d_v$ and
$$
\frac{2\sqrt{d_{u_0}' d_v'}}{d_{u_0}' + d_v'}
= \frac{2\sqrt{d_{u_0} d_v}}{d_{u_0} + d_v}
\, .
$$

If $v \in N(u_0) \setminus \{v_0 \}$, then
$d_{u_0}'= d_{u_0}-1< d_{u_0} \le d_{v}$ and Corollary \ref{c:t} gives
$$
\frac{2\sqrt{d_{u_0}' d_v'}}{d_{u_0}' + d_v'}
= \frac{2\sqrt{(d_{u_0}-1) d_v}}{d_{u_0} + d_v-1}
< \frac{2\sqrt{d_{u_0} d_v}}{d_{u_0} + d_v}
\, .
$$

In a similar way, if $v \in N(v_0) \setminus \{u_0 \}$, then
$$
\frac{2\sqrt{d_{v_0}' d_v'}}{d_{v_0}' + d_v'}
= \frac{2\sqrt{(d_{v_0}-1) d_v}}{d_{v_0} + d_v-1}
< \frac{2\sqrt{d_{v_0} d_v}}{d_{v_0} + d_v}
\, .
$$

Summing up on $uv \in E(G) \setminus \{u_0v_0 \}$ we obtain the inequality.
\end{proof}

Another natural problem is to characterize the graphs $G$ such that $GA_1(G)$ is a rational number.

\begin{proposition} \label{p:square}
A graph $G$ satisfies
$GA_1(G) \in \QQ$ if and only if $d_u d_v$ is a perfect square for every $uv\in E(G)$.
\end{proposition}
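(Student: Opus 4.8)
The ``if'' direction is immediate: if $d_ud_v$ is a perfect square for every edge $uv$, then each summand $\frac{2\sqrt{d_ud_v}}{d_u+d_v}$ is a quotient of integers, hence rational, and a finite sum of rationals is rational. So the plan is to concentrate on the ``only if'' direction, and the natural approach is to group the edges of $G$ according to the value of the pair $\{d_u,d_v\}$ (equivalently, by Corollary \ref{c:t}, according to the ratio $d_u/d_v$ up to inversion). For each unordered pair $(a,b)$ of degrees that actually occurs, let $m_{a,b}$ be the number of edges with $\{d_u,d_v\}=\{a,b\}$; then
\begin{equation} \label{eq:GAsum}
GA_1(G) = \sum_{(a,b)} m_{a,b}\,\frac{2\sqrt{ab}}{a+b}\,,
\end{equation}
a finite sum with positive integer coefficients. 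What must be shown is that this sum is irrational unless every $\sqrt{ab}$ appearing is actually an integer, i.e. unless every $ab$ is a perfect square.

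The key step is a linear-independence statement over $\QQ$: the numbers $\frac{2\sqrt{ab}}{a+b}$ attached to distinct ``ratio classes'' are linearly independent over $\QQ$ \emph{once one separates the rational ones from the irrational ones}. More precisely, write each term as $\frac{2}{a+b}\sqrt{ab}$; since $a+b$ is a positive integer, rationality of $GA_1(G)$ is equivalent to rationality of $\sum_{(a,b)} c_{a,b}\sqrt{ab}$ for suitable positive rationals $c_{a,b}$ (clear denominators first if one prefers to work with integers). Now write each $ab = s_{a,b}^2 \, q_{a,b}$ with $q_{a,b}$ squarefree; the term is rational exactly when $q_{a,b}=1$. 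Collect the terms with $q_{a,b}=1$ into a rational number $r$, so that $GA_1(G) - r = \sum_{q_{a,b}>1} c'_{a,b}\sqrt{q_{a,b}}$ with each $c'_{a,b}>0$. The classical fact that $\{\sqrt{q} : q \text{ squarefree}\}$ is linearly independent over $\QQ$ does not quite finish it, because different pairs $(a,b)$ can share the same squarefree part $q_{a,b}$; but that only helps --- terms with the same $\sqrt{q}$ combine into a still strictly positive multiple of $\sqrt{q}$. So $GA_1(G)-r = \sum_{q>1} C_q \sqrt{q}$ with each $C_q > 0$ and the sum over distinct squarefree $q>1$. By linear independence of $1$ and the $\sqrt{q}$ over $\QQ$, this is rational if and only if there are no terms at all, i.e. every occurring $q_{a,b}$ equals $1$, i.e. every $d_ud_v$ is a perfect square.

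The main obstacle, and the one thing that genuinely needs care, is justifying the linear independence of $\{1\}\cup\{\sqrt{q}: q>1 \text{ squarefree}\}$ over $\QQ$ and handling the bookkeeping that lets us pass from ``no cancellation because all coefficients are positive'' to the conclusion. The positivity of every coefficient $\frac{2\sqrt{ab}}{a+b}$ (indeed every coefficient $m_{a,b}>0$) is essential: it rules out the only way a sum of $\sqrt{q}$'s could be rational, namely a nontrivial vanishing linear combination. I would either cite the standard result that distinct square roots of squarefree integers are $\QQ$-linearly independent (a routine consequence of the multiplicativity of the norm in $\QQ(\sqrt{q_1},\dots,\sqrt{q_k})$, or of unique factorization), or, if a self-contained argument is preferred, give the short induction on the number of distinct primes involved. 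Everything else --- the decomposition \eqref{eq:GAsum}, extracting the squarefree parts, and grouping --- is purely formal.
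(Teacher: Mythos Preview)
Your argument is correct and follows the same route as the paper: both reduce the ``only if'' direction to the classical fact that $\{\sqrt{q}:q\text{ squarefree}\}$ is linearly independent over $\QQ$ (the paper states this as Proposition~\ref{p:square2} and simply says the result is a direct consequence, noting that $1$ is squarefree so that rationality forces all coefficients of $\sqrt{q}$ with $q>1$ to vanish). Your write-up is more explicit about the bookkeeping --- grouping by degree pairs, extracting squarefree parts, and using positivity to rule out cancellation --- but the underlying idea is identical.
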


Proposition \ref{p:square} is a direct consequence of the following known result (since $1$ is a squarefree positive integer).

\begin{proposition} \label{p:square2}
The set $\{  \sqrt{n}\,: \; n$ is a squarefree positive integer$\,\}$
is linearly independent over $\QQ$.
\end{proposition}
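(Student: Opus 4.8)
The plan is to deduce Proposition~\ref{p:square2} from the classical theory of field extensions, specifically the fact that $\QQ(\sqrt{p_1},\dots,\sqrt{p_k})$ has degree $2^k$ over $\QQ$ when $p_1,\dots,p_k$ are distinct primes. First I would reduce the statement about squarefree integers to one about primes: writing any squarefree $n$ as a product of distinct primes, I would show that the $\QQ$-span of $\{\sqrt{n}: n \text{ squarefree}\}$ is contained in the $\QQ$-span of the products $\prod_{i\in S}\sqrt{p_i}$ over subsets $S$ of a fixed finite set of primes (after fixing finitely many squarefree integers one wants to test for a linear relation). So it suffices to prove that, for distinct primes $p_1,\dots,p_k$, the $2^k$ elements $\big\{\prod_{i\in S}\sqrt{p_i}: S\subseteq\{1,\dots,k\}\big\}$ are linearly independent over $\QQ$.

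The key step is then to establish that $[\QQ(\sqrt{p_1},\dots,\sqrt{p_k}):\QQ]=2^k$, which I would prove by induction on $k$. The base case $k=1$ is the irrationality of $\sqrt{p_1}$. For the inductive step, set $K=\QQ(\sqrt{p_1},\dots,\sqrt{p_{k-1}})$, which by hypothesis has degree $2^{k-1}$ over $\QQ$ and a $\QQ$-basis consisting of the $2^{k-1}$ square-root products over subsets of $\{1,\dots,k-1\}$; I must show $\sqrt{p_k}\notin K$, for then $[K(\sqrt{p_k}):K]=2$ and multiplicativity of degrees gives the claim. The basis of $2^k$ products over subsets of $\{1,\dots,k\}$ is then automatically a $\QQ$-basis of $\QQ(\sqrt{p_1},\dots,\sqrt{p_k})$, hence linearly independent. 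Finally, a genuine linear relation among finitely many $\sqrt{n}$'s with squarefree $n$, after clearing to a common set of primes, would contradict this independence (the products $\prod_{i\in S}\sqrt{p_i}$ for distinct subsets $S$ are distinct basis elements), so no nontrivial relation exists.

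The main obstacle is the sub-step $\sqrt{p_k}\notin K$. The clean way is to suppose $\sqrt{p_k}=\sum_{S\subseteq\{1,\dots,k-1\}} c_S \prod_{i\in S}\sqrt{p_i}$ with $c_S\in\QQ$ and derive a contradiction. One approach is a Galois-theoretic one: $K/\QQ$ is Galois with group $(\ZZ/2\ZZ)^{k-1}$, acting by independent sign changes $\sqrt{p_i}\mapsto\pm\sqrt{p_i}$; applying the automorphism that fixes all $\sqrt{p_i}$, $i<k$, forces (after comparing with the action on $\sqrt{p_k}$, which would have to be fixed too) that all $c_S=0$ except possibly $c_\emptyset$, reducing to $\sqrt{p_k}\in\QQ$, impossible. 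Alternatively one can square both sides and run a direct descent argument on the exponents, but the Galois argument is cleaner and avoids case analysis. Since Proposition~\ref{p:square2} is explicitly flagged in the excerpt as ``the following known result,'' I would keep the write-up brief, citing a standard algebra reference for the degree formula and giving only the short reduction above.
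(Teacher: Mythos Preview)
The paper does not prove Proposition~\ref{p:square2} at all; it is merely stated as a known result and then used to derive Proposition~\ref{p:square}. So there is no proof in the paper to compare your approach against, and your instinct to cite a standard reference matches what the authors do.

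That said, your sketch of the Galois argument for the key sub-step $\sqrt{p_k}\notin K$ contains a genuine gap. You write that one should apply ``the automorphism that fixes all $\sqrt{p_i}$, $i<k$,'' but in $\mathrm{Gal}(K/\QQ)$ that automorphism is the identity on $K$, and applying it yields no information whatsoever about the coefficients $c_S$. The correct version of the argument uses the \emph{non-trivial} automorphisms: for each $j<k$ let $\sigma_j\in\mathrm{Gal}(K/\QQ)$ flip the sign of $\sqrt{p_j}$ and fix the remaining $\sqrt{p_i}$. Since $\sigma_j(\sqrt{p_k})=\pm\sqrt{p_k}$, comparing with the expansion $\sqrt{p_k}=\sum_S c_S\prod_{i\in S}\sqrt{p_i}$ forces all $c_S$ to vanish except for a single subset $T$ (namely $T=\{j:\sigma_j(\sqrt{p_k})=-\sqrt{p_k}\}$), so that $\sqrt{p_k}=c_T\sqrt{\prod_{i\in T}p_i}$ and hence $\sqrt{p_k\prod_{i\in T}p_i}\in\QQ$; this is impossible because $p_k\prod_{i\in T}p_i$ is a squarefree integer greater than $1$. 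With this correction the inductive argument goes through, and your reduction from arbitrary squarefree integers to products over a fixed set of primes is fine.
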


One can think that perhaps condition in Proposition \ref{p:square} holds if and only if $G$ is a biregular graph.
However, the following example shows that this is not true:

\medskip

\noindent {\bf Example 5}.
Given any different positive integers $n_1,n_2,$
the double star graph $S_{n_1^2-1,n_2^2-1}$
verifies
$$
GA_1(S_{n_1^2-1,n_2^2-1}) = \frac{2(n_1^2-1)n_1}{n_1^2+1} + \frac{2(n_2^2-1)n_2}{n_2^2+1} + \frac{2\,n_1 n_2}{n_1^2+n_2^2} \in \QQ ,
$$
and $S_{n_1^2-1,n_2^2-1}$ is not a biregular graph since $n_1\neq n_2$.

\medskip

Theorem \ref{t:line3} shows that if $GA_1(G) \le 4$ and $GA_1(G) \in \ZZ$, then $G$ is a regular graph.
One can think that perhaps, in general, if $GA_1(G) \in \ZZ$, then $G$ is a regular graph.
The following example shows that this is not true:

\medskip

\noindent {\bf Example 6}.
Given any positive integer $k$,
the complete bipartite graph $K_{5k,20k}$ verifies
$$
GA_1(K_{5k,20k})
= \frac{2(5k \cdot 20k)^{3/2}}{5k+20k}
= \frac{2(10k)^{3}}{25k}
= 80 k^2
\in \ZZ ,
$$
and $K_{5k,20k}$ is not a regular graph.

\section{Geometric-arithmetic index and line graphs}


Recall that if the vertex $a$ in $\mathcal{L}(G)$ corresponds to the edge $uv$, then $d_a=d_u+d_v-2$.
Hence, $\D_{\mathcal{L}(G)}\le 2\D-2$ and $\d_{\mathcal{L}(G)}\ge 2\d-2$.


We need the following technical result.

\begin{lemma} \label{l:line3}
We have for any $\d\ge 1$ and $t \ge 0$ with $2\d+t>2$
$$
\frac{\sqrt{(\d+t-1)(\d-1)}}{2\d+t-2} \le \frac{\sqrt{(\d+t)\d}}{2\d+t} \, .
$$
\end{lemma}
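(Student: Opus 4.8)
The plan is to symmetrize the inequality. Writing $2\d+t-2=(\d-1)+(\d+t-1)$ and $2\d+t=\d+(\d+t)$, I would set $x=\d-1\ge 0$ and $y=\d+t-1\ge 0$, so that $\d=x+1$, $\d+t=y+1$, the hypothesis $2\d+t>2$ becomes $x+y>0$, and $t\ge 0$ gives $x\le y$. In these variables the claim reads
$$
\frac{\sqrt{xy}}{x+y}\le \frac{\sqrt{(x+1)(y+1)}}{x+y+2}\,.
$$

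If $x=0$ (that is, $\d=1$) the left-hand side vanishes and the inequality is trivial, so I may assume $x\ge 1$ and hence $y\ge 1$; in particular both sides are positive and squaring is legitimate. Multiplying out, the inequality is equivalent to
$$
xy\,(x+y+2)^2\le (x+1)(y+1)(x+y)^2\,.
$$
Introducing $S=x+y$ and $P=xy$ and using $(x+1)(y+1)=P+S+1$, the right-hand side equals $PS^2+S^3+S^2$ and the left-hand side equals $P(S^2+4S+4)$, so after cancelling $PS^2$ the inequality becomes $4PS+4P\le S^3+S^2$, i.e. $4P(S+1)\le S^2(S+1)$. Since $S=x+y>0$ this is equivalent to $4P\le S^2$, that is $4xy\le(x+y)^2$, which is just $(x-y)^2\ge 0$.

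Alternatively --- and this is the route I would prefer to present, since it reuses machinery already set up --- I would note that $\sqrt{xy}/(x+y)=\tfrac12\,f\big(\sqrt{x/y}\big)$ and $\sqrt{(x+1)(y+1)}/(x+y+2)=\tfrac12\,f\big(\sqrt{(x+1)/(y+1)}\big)$, where $f(u)=2u/(1+u^2)$ is the function of Lemma \ref{l:t}. Because $x\le y$ one checks immediately by cross-multiplying that $0\le x/y\le (x+1)/(y+1)\le 1$, and then the fact that $f$ strictly increases on $[0,1]$, asserted in Lemma \ref{l:t} (equivalently, the lower bound in Corollary \ref{c:t}), yields the desired inequality at once. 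Either way there is essentially no analytic obstacle here: the only point requiring a little care is to record that the hypothesis $2\d+t>2$ is exactly what keeps the relevant denominators positive and lets the degenerate case $\d=1$ be disposed of separately.
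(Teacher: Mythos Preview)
Your proof is correct. Your first route---substituting $x=\d-1$, $y=\d+t-1$, squaring, and reducing to $(x-y)^2\ge 0$---is essentially the paper's own argument in disguise: the paper squares and cross-multiplies directly in the original variables and computes the difference as $t^3+(2\d-1)t^2=t^2(2\d+t-1)$, which is exactly your $(x-y)^2(x+y+1)$. (One small wording point: you pass from $x=0$ to ``$x\ge 1$'', which presumes $\d$ is an integer; the argument only needs $x>0$, so you might phrase it that way to match the lemma's real-variable hypothesis.)

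Your second route, by contrast, is genuinely different from the paper's and arguably nicer: rewriting each side as $\tfrac12 f(\sqrt{x/y})$ and $\tfrac12 f(\sqrt{(x+1)/(y+1)})$ and then invoking the monotonicity of $f(u)=2u/(1+u^2)$ on $[0,1]$ from Lemma~\ref{l:t} makes the inequality an immediate consequence of machinery already in place, with no algebraic expansion at all. The paper's brute-force computation has the virtue of being self-contained, but your monotonicity argument explains \emph{why} the inequality holds and would generalize readily (e.g., to shifts other than $1$).
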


\begin{proof}
The statement is equivalent to
$$
\d(\d+t)(2\d+t-2)^2 - (\d-1)(\d+t-1)(2\d+t)^2
= t^3 +(2\d-1)t^2 \ge 0
$$
for every $\d\ge 1$ and $t \ge 0$, which is direct.
\end{proof}

\begin{proposition} \label{p:line02}
If $G$ is a non-trivial graph with $m$ edges, maximum degree $\D$ and minimum degree $\d$, then
$$
\frac{(M_1(G) - 2m)\sqrt{(\D-1)(\d-1)}}{\D + \d-2}
\le GA_1(\mathcal{L}(G))
\le \frac12\, M_1(G)-m .
$$
The equality is attained in the lower bound if and only if $G$ is regular.
The equality is attained in the upper bound if and only if $\mathcal{L}(G)$ is regular.
\end{proposition}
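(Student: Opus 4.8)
The plan is to express $GA_1(\mathcal{L}(G))$ as a sum over edges of $\mathcal{L}(G)$ and compare it termwise with a corresponding sum controlled by $M_1(G)-2m$, using Corollary~\ref{c:t} and Lemma~\ref{l:line3}. First I recall the two structural facts already stated: a vertex $a\in V(\mathcal{L}(G))$ corresponding to $uv\in E(G)$ has degree $d_a=d_u+d_v-2$, and a vertex $a\in \mathcal{L}(G)$ corresponds to an edge of $G$, so that the number of vertices of $\mathcal{L}(G)$ is $m$. Crucially, two vertices of $\mathcal{L}(G)$ are adjacent when the corresponding edges of $G$ share an endpoint, so for each vertex $w\in V(G)$ of degree $d_w$ the edges incident to $w$ form a clique on $d_w$ vertices in $\mathcal{L}(G)$; counting adjacencies this way shows $m_{\mathcal{L}(G)}=\sum_{w\in V(G)}\binom{d_w}{2}=\tfrac12\sum_w d_w^2-\tfrac12\sum_w d_w=\tfrac12 M_1(G)-m$. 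This immediately gives the upper bound, because by Corollary~\ref{c:t} each of the $m_{\mathcal{L}(G)}$ summands in $GA_1(\mathcal{L}(G))=\sum_{ab\in E(\mathcal{L}(G))}\frac{2\sqrt{d_a d_b}}{d_a+d_b}$ is at most $1$, with equality throughout precisely when all $d_a$ are equal, i.e.\ $\mathcal{L}(G)$ is regular.

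For the lower bound I would again apply the lower bound of Corollary~\ref{c:t} to each summand. Since every vertex of $\mathcal{L}(G)$ has degree between $\d_{\mathcal{L}(G)}$ and $\D_{\mathcal{L}(G)}$, and these satisfy $2\d-2\le \d_{\mathcal{L}(G)}$ and $\D_{\mathcal{L}(G)}\le 2\D-2$, Corollary~\ref{c:t} gives
\[
\frac{2\sqrt{d_a d_b}}{d_a+d_b}\ge \frac{2\sqrt{\d_{\mathcal{L}(G)}\D_{\mathcal{L}(G)}}}{\d_{\mathcal{L}(G)}+\D_{\mathcal{L}(G)}}
\]
for each edge $ab$. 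Here is where Lemma~\ref{l:line3} enters: the function $t\mapsto \sqrt{(\d+t)\d}/(2\d+t)$ is what governs $\frac{2\sqrt{xy}}{x+y}$ when $x=2\d-2+\text{(something)}$, and Lemma~\ref{l:line3} (read with the substitution making $\D$ play the role of $\d+t$, so $t=\D-\d$) shows that the quantity $\frac{\sqrt{(\D-1)(\d-1)}}{\D+\d-2}$ is a valid lower bound for $\frac{\sqrt{\d_{\mathcal{L}(G)}\D_{\mathcal{L}(G)}}}{\d_{\mathcal{L}(G)}+\D_{\mathcal{L}(G)}}$: indeed writing the worst case $\d_{\mathcal{L}(G)}=2\d-2$, $\D_{\mathcal{L}(G)}=2\D-2$ we get $\frac{\sqrt{(2\d-2)(2\D-2)}}{(2\d-2)+(2\D-2)}=\frac{\sqrt{(\d-1)(\D-1)}}{\d+\D-2}$ exactly, and Lemma~\ref{l:line3} together with Lemma~\ref{l:t} confirms this extreme is the genuine minimum over the allowed ranges of $d_a,d_b$. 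Multiplying this per-edge bound by $m_{\mathcal{L}(G)}=\tfrac12 M_1(G)-m = \tfrac12(M_1(G)-2m)$ and absorbing the factor $2$ gives the claimed lower bound $\frac{(M_1(G)-2m)\sqrt{(\D-1)(\d-1)}}{\D+\d-2}$.

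The equality analysis is the step requiring care. Equality in the lower bound forces equality in Corollary~\ref{c:t} for every edge $ab\in E(\mathcal{L}(G))$ simultaneously with the extreme value, which means that for each such edge $\{d_a,d_b\}=\{\d_{\mathcal{L}(G)},\D_{\mathcal{L}(G)}\}$ and moreover $\d_{\mathcal{L}(G)}=2\d-2$, $\D_{\mathcal{L}(G)}=2\D-2$. If $\d_{\mathcal{L}(G)}<\D_{\mathcal{L}(G)}$ this would force $\mathcal{L}(G)$ to be bipartite with the two degrees on the two sides; but $\mathcal{L}(G)$ always contains triangles whenever $G$ has a vertex of degree $\ge 3$ or two adjacent edges sharing a triangle, and a careful case check (using that each $w\in V(G)$ contributes a clique $K_{d_w}$) shows a biregular $\mathcal{L}(G)$ with $\d_{\mathcal{L}(G)}\ne \D_{\mathcal{L}(G)}$ is impossible for a non-trivial graph; hence $\d_{\mathcal{L}(G)}=\D_{\mathcal{L}(G)}$, i.e.\ $\mathcal{L}(G)$ is regular, which combined with $\d_{\mathcal{L}(G)}=2\d-2=2\D-2=\D_{\mathcal{L}(G)}$ forces $\d=\D$, so $G$ is regular; the converse is immediate. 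I expect this last point — ruling out the biregular-but-not-regular case for $\mathcal{L}(G)$ and tracking that the inequalities $\d_{\mathcal{L}(G)}\ge 2\d-2$, $\D_{\mathcal{L}(G)}\le 2\D-2$ are simultaneously tight only in the regular case — to be the main obstacle; everything else is a direct application of Corollary~\ref{c:t} and Lemma~\ref{l:line3}.
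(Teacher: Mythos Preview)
Your overall approach matches the paper's: compute $m_{\mathcal{L}(G)}=\tfrac12 M_1(G)-m$, apply Proposition~\ref{p:0} (equivalently, Corollary~\ref{c:t} termwise) to $\mathcal{L}(G)$, and then push $\d_{\mathcal{L}(G)},\D_{\mathcal{L}(G)}$ to $2\d-2,2\D-2$. Two corrections are in order.

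First, Lemma~\ref{l:line3} is not needed here and does not say what you invoke it for. That lemma compares $\frac{\sqrt{(\D-1)(\d-1)}}{\D+\d-2}$ with $\frac{\sqrt{\D\d}}{\D+\d}$; it is used later, in Corollary~\ref{c:line1}, not in this proposition. The step you actually need, namely
\[
\frac{2\sqrt{\d_{\mathcal{L}(G)}\D_{\mathcal{L}(G)}}}{\d_{\mathcal{L}(G)}+\D_{\mathcal{L}(G)}}\ \ge\ \frac{2\sqrt{(2\d-2)(2\D-2)}}{(2\d-2)+(2\D-2)}\,,
\]
is simply a second application of Corollary~\ref{c:t} with $a=2\d-2$ and $b=2\D-2$, exactly as the paper does.

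Second, your equality analysis for the lower bound has a genuine gap. The assertion ``a biregular $\mathcal{L}(G)$ with $\d_{\mathcal{L}(G)}\neq\D_{\mathcal{L}(G)}$ is impossible for a non-trivial graph'' is false: for $G=P_4$ one has $\mathcal{L}(G)=P_3$, which is $(2,1)$-biregular. Your triangle observation only handles $\D\ge 3$, and the leftover path/cycle cases do not close the argument as stated. The paper avoids this detour by using \emph{all} the equality constraints simultaneously. From $\D_{\mathcal{L}(G)}=2\D-2$ it picks $uv\in E(G)$ with $d_{uv}=2\D-2$, which forces $d_u=d_v=\D$. Non-triviality yields some $w\in N(v)\setminus\{u\}$; since $\mathcal{L}(G)$ is regular or biregular and $uv$ already has the maximum $\mathcal{L}(G)$-degree, its neighbor $vw$ must satisfy $d_{vw}=\d_{\mathcal{L}(G)}=2\d-2$, hence $d_v+d_w=2\d$ and therefore $d_v=d_w=\d$. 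But $d_v=\D$, so $\D=\d$ and $G$ is regular. This argument uses the tightness $\d_{\mathcal{L}(G)}=2\d-2$ and $\D_{\mathcal{L}(G)}=2\D-2$ in an essential way, rather than trying to classify when $\mathcal{L}(G)$ can be biregular in the abstract.
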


\begin{proof}
Since $d_{uv}=d_u+d_v-2$ for every $uv \in V(\mathcal{L}(G))=E(G)$, we have
$$
2 m_{\mathcal{L}(G)}
= \sum_{uv\in V(\mathcal{L}(G))} d_{uv}
= \sum_{uv\in E(G)} (d_u + d_v - 2)
= M_1(G) - 2m .
$$
This equality and Proposition \ref{p:0} give
$$
GA_1(\mathcal{L}(G))
\le m_{\mathcal{L}(G)}
= \frac12\, M_1(G)-m .
$$
\indent
The previous argument and Proposition \ref{p:0} give that the equality is attained if and only if $\mathcal{L}(G)$ is regular.

\smallskip

Proposition \ref{p:0} gives
$$
\frac{2m_{\mathcal{L}(G)}\sqrt{\D_{\mathcal{L}(G)}\d_{\mathcal{L}(G)}}}{\D_{\mathcal{L}(G)} + \d_{\mathcal{L}(G)}} \le
GA_1(\mathcal{L}(G)).
$$
Since $2m_{\mathcal{L}(G)} = M_1(G) - 2m$,
$\D_{\mathcal{L}(G)} \le 2\D-2$ and $\d_{\mathcal{L}(G)} \ge 2\d-2$, Corollary \ref{c:t} and Proposition \ref{p:0} give
$$
\frac{(M_1(G) - 2m) \sqrt{(\D-1)(\d-1)}}{\D + \d-2}
\le \frac{2m_{\mathcal{L}(G)}\sqrt{\D_{\mathcal{L}(G)}\d_{\mathcal{L}(G)}}}{\D_{\mathcal{L}(G)} + \d_{\mathcal{L}(G)}}
\le GA_1(\mathcal{L}(G)) .
$$

If $G$ is regular, then $\D-1=\d-1$,
$$
\frac{(M_1(G) - 2m)\sqrt{(\D-1)(\d-1)}}{\D + \d-2}
= \frac12 \, M_1(G) -m
= m_{\mathcal{L}(G)}
= GA_1(\mathcal{L}(G)) ,
$$
and the equality is attained.

The previous argument gives that if the equality is attained, then
$$
GA_1(\mathcal{L}(G))
= \frac{2m_{\mathcal{L}(G)}\sqrt{\D_{\mathcal{L}(G)}\d_{\mathcal{L}(G)}}}{\D_{\mathcal{L}(G)} + \d_{\mathcal{L}(G)}}
= \frac{2m_{\mathcal{L}(G)}\sqrt{(\D-1)(\d-1)}}{\D + \d-2}
\, .
$$
Thus, Corollary \ref{c:t} gives that $\D_{\mathcal{L}(G)} = 2\D-2$ and $\d_{\mathcal{L}(G)} = 2\d-2$.
Also, Proposition \ref{p:0} gives that $\mathcal{L}(G)$ is regular or biregular.

Let us consider $uv \in E(G)=V(\mathcal{L}(G))$ with $d_{uv}=\D_{\mathcal{L}(G)} = 2\D-2$, and so, $d_{u}=d_v = \D$.
Seeking for a contradiction assume that $N(v)=\{u\}$. Hence, $1=d_v = \D$ and the connected component of $G$ containing $uv$ is just an edge, a contradiction.
Therefore, there exists $w \in N(v) \setminus \{u\}$.
Since $d_{uv}=\D_{\mathcal{L}(G)} = 2\D-2$, $vw \in N(uv)$ and $\mathcal{L}(G)$ is regular or biregular,
we have $d_{vw}=\d_{\mathcal{L}(G)} = 2\d-2$ and so $d_v=d_w=\d$.
Thus, $\D=\d$ and $G$ is regular.
\end{proof}

The following result is known.
We include a proof for the sake of completeness.

\begin{proposition} \label{p:line0}
If $G$ is a non-trivial graph with maximum degree $\D$ and minimum degree $\d$,
then $\mathcal{L}(G)$ is regular if and only if each connected component $\{G_1,\dots,G_k\}$ of $G$ is regular or biregular, and
$\D_i+ \d_i = \D+ \d$ for every $1 \le i \le k$, where $\D_i$ and $\d_i$ denote the maximum and minimum degree of $G_i$, respectively, for $1 \le i \le k$.

In particular, if $G$ is a connected non-trivial graph, then $\mathcal{L}(G)$ is regular if and only if $G$ is regular or biregular.
\end{proposition}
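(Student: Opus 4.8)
The plan is to characterize when $\mathcal{L}(G)$ is regular by examining the degrees of its vertices, which are the edges of $G$. Recall that a vertex of $\mathcal{L}(G)$ corresponding to the edge $uv \in E(G)$ has degree $d_u + d_v - 2$. Thus $\mathcal{L}(G)$ is $r$-regular if and only if $d_u + d_v = r + 2$ for every edge $uv \in E(G)$. Since the line graph of a disconnected graph is the disjoint union of the line graphs of its connected components, it suffices to handle each component and then note that $\mathcal{L}(G)$ is regular precisely when every $\mathcal{L}(G_i)$ is regular \emph{with the same regularity constant}; the latter condition is exactly $\D_i + \d_i = \D + \d$ for all $i$ (using that within one connected component the extreme degrees are realized on adjacent-through-an-edge vertices, so $r_i + 2 = \D_i + \d_i$). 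This reduces everything to the ``in particular'' statement for connected graphs.

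For the connected case, I would argue as follows. First, the easy direction: if $G$ is regular or biregular, then for every edge $uv$ we have $\{d_u,d_v\} = \{\D,\d\}$ (a single value when regular), so $d_u + d_v = \D + \d$ is constant, whence $\mathcal{L}(G)$ is regular. For the converse, assume $\mathcal{L}(G)$ is regular, so $d_u + d_v = c$ for every $uv \in E(G)$ and some constant $c$. If $\D = \d$ we are done. Otherwise pick a vertex $v$ of degree $\d$; it is non-isolated since $G$ is non-trivial and connected, so it has a neighbor $u$, and $d_u = c - \d$. Every neighbor of $v$ must have degree $c - \d$, and since $v$ has $\d$ neighbors all of degree $c-\d > \d$ (if $c - \d = \d$ then $\D = \d$ by connectivity, handled already), I claim this propagates.

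The key step is to show that exactly two degree values occur. Take any vertex $w$ with a neighbor $x$; then $d_w + d_x = c$. Starting from the degree-$\d$ vertex $v$, all its neighbors have degree $c - \d$; all \emph{their} neighbors have degree $c - (c-\d) = \d$; and so on. By connectedness, every vertex has degree either $\d$ or $c - \d$, and moreover the graph is bipartite with the two classes being the degree-$\d$ vertices and the degree-$(c-\d)$ vertices (every edge joins one of each, since $d_u + d_v = c$ forces $\{d_u,d_v\} = \{\d, c-\d\}$ whenever $\d \ne c - \d$). Since $\d$ is the minimum degree and $c - \d > \d$ is then the maximum degree $\D$, we conclude $G$ is $(\D,\d)$-biregular.

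I expect the main obstacle to be bookkeeping in the disconnected-to-connected reduction: one must verify carefully that a disjoint union of regular line graphs of differing regularities is \emph{not} regular, and conversely that $\mathcal{L}(G)$ regular forces all the per-component constants $\D_i + \d_i$ to agree — which is immediate once one observes that $\D_i + \d_i$ equals $r_i + 2$ where $r_i$ is the regularity of $\mathcal{L}(G_i)$, because within the connected graph $G_i$ one can find an edge incident to a vertex of degree $\d_i$ (hence the other endpoint has degree $\le \D_i$, but the constant sum then pins things down) — combined with Proposition~\ref{p:0}-style reasoning already available in the excerpt. The connected case itself is a short propagation argument and should not present real difficulty.
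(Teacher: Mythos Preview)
Your approach is correct and essentially the same as the paper's: both use the degree formula $d_{uv}=d_u+d_v-2$ in $\mathcal{L}(G)$, the propagation argument that neighbors-of-neighbors recover the original degree (forcing each connected component to be regular or biregular), and then handle the disconnected case by matching the per-component constants. The only step you leave implicit that the paper makes explicit is why the common value of $\D_i+\d_i$ actually equals $\D+\d$: pick the component $G_j$ with $\D_j=\D$; then $\d_j=(r+2)-\D\le (r+2)-\D_i=\d_i$ for every $i$, so $\d_j=\d$ and hence $r+2=\D+\d$.
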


\begin{proof}
Assume that $\mathcal{L}(G)$ is regular.
Thus, there exists a constant $k$ with $d_u + d_v - 2 = k$ for every $uv\in E(G)=V(\mathcal{L}(G))$.
Therefore, $d_v = k + 2 - d_u$ for every $u\in V(G)$ and $v\in N(u)$.
If $w\in N(v)$, then $d_w = k + 2 - d_v = k + 2 - ( k + 2 - d_u) = d_u$, and so, the connected component $G_i$ of $G$ containing $u,v,w$ is regular or biregular, and thus,
$\D_i+ \d_i =k+2$.
Fix $1 \le j \le k$ with $\D_j = \D$; then $\d_j = k+2 - \D \le k+2 - \D_i = \d_i$ for every $1 \le i \le k$, and hence, $\d_j = \d$.
Therefore, $k+2 = \D_j+ \d_j = \D + \d$.

If each $G_i$ is regular or biregular, and $\D_i+ \d_i = \D + \d$ for every $1 \le i \le k$, then for every $uv\in E(G)$ we have
$d_{uv} = d_u + d_v - 2 = \D_i+ \d_i-2 = \D + \d -2$.
Thus, $\mathcal{L}(G)$ is regular.
\end{proof}

In 1956, Nordhaus and Gaddum \cite{NG} gave bounds involving the sum of the chromatic number of a graph and its complement.
Motivated by these results, Das obtains in \cite{D} analogous conclusions for the geometric-arithmetic index of a graph and its complement.
Our next theorem is also a Nordhaus-Gaddum-type result for the geometric-arithmetic index of a graph and its line graph.


\begin{corollary} \label{c:line1}
If $G$ is a non-trivial graph with maximum degree $\D$ and minimum degree $\d$, then
$$
\frac{M_1(G)\sqrt{(\D-1)(\d-1)}}{\D + \d-2}
\le GA_1(G) + GA_1(\mathcal{L}(G))
\le \frac12\, M_1(G) ,
$$
and the equality in each inequality is attained if and only if $G$ is regular.
\end{corollary}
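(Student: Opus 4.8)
The plan is to obtain Corollary \ref{c:line1} by simply adding together the bounds for $GA_1(G)$ coming from Proposition \ref{p:0} and the bounds for $GA_1(\mathcal{L}(G))$ coming from Proposition \ref{p:line02}, after first rewriting everything in terms of $M_1(G)$. First I would recall that Proposition \ref{p:0} gives
$$
\frac{2m\sqrt{\D\d}}{\D + \d} \le GA_1(G) \le m ,
$$
but this is not yet in the right form: I need the upper bound on $GA_1(G)$ to combine cleanly with the upper bound $\frac12 M_1(G) - m$ on $GA_1(\mathcal{L}(G))$. Adding $m + \big(\frac12 M_1(G) - m\big) = \frac12 M_1(G)$ immediately yields the upper bound in the statement. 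For the lower bound, Proposition \ref{p:line02} already supplies $\frac{(M_1(G)-2m)\sqrt{(\D-1)(\d-1)}}{\D+\d-2}$ for the line-graph term, so I would want to show that the $GA_1(G)$ lower bound $\frac{2m\sqrt{\D\d}}{\D+\d}$ dominates $\frac{2m\sqrt{(\D-1)(\d-1)}}{\D+\d-2}$, and then the two lower bounds add to at least $\frac{(M_1(G)-2m)\sqrt{(\D-1)(\d-1)}}{\D+\d-2} + \frac{2m\sqrt{(\D-1)(\d-1)}}{\D+\d-2} = \frac{M_1(G)\sqrt{(\D-1)(\d-1)}}{\D+\d-2}$.

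So the key computational step is the inequality
$$
\frac{2m\sqrt{\D\d}}{\D+\d} \ge \frac{2m\sqrt{(\D-1)(\d-1)}}{\D+\d-2} ,
$$
which is exactly Lemma \ref{l:line3} applied with the substitution $\d \mapsto \d$ and $t \mapsto \D-\d$ (so that $\d+t = \D$ and $2\d+t-2 = \D+\d-2$); note the hypothesis $2\d+t>2$ of the lemma is guaranteed here because $G$ is non-trivial, so $\D+\d \ge 3$. Once this is in hand, I would assemble the lower bound: using Proposition \ref{p:0}, Lemma \ref{l:line3}, and Proposition \ref{p:line02} in that order gives
$$
GA_1(G) + GA_1(\mathcal{L}(G)) \ge \frac{2m\sqrt{(\D-1)(\d-1)}}{\D+\d-2} + \frac{(M_1(G)-2m)\sqrt{(\D-1)(\d-1)}}{\D+\d-2} = \frac{M_1(G)\sqrt{(\D-1)(\d-1)}}{\D+\d-2} .
$$

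For the equality characterizations I would argue as follows. If $G$ is regular with common degree $\D=\d$, then $\mathcal{L}(G)$ is regular (by Proposition \ref{p:line0}, or directly since $d_{uv}=2\D-2$ for all edges), so both inequalities in Proposition \ref{p:0} for $GA_1(G)$ and both inequalities in Proposition \ref{p:line02} become equalities, and hence so do both inequalities in the corollary. Conversely, suppose equality holds in the upper bound $GA_1(G)+GA_1(\mathcal{L}(G)) = \frac12 M_1(G)$; since $GA_1(G) \le m$ and $GA_1(\mathcal{L}(G)) \le \frac12 M_1(G) - m$, equality in the sum forces equality in $GA_1(G) = m$, which by Proposition \ref{p:0} means $G$ is regular. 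If instead equality holds in the lower bound, then the chain above must be tight at every step; in particular equality must hold in Lemma \ref{l:line3}, i.e. $t^3 + (2\d-1)t^2 = 0$ with $t = \D-\d \ge 0$ and $\d \ge 1$, which forces $t=0$, that is $\D = \d$, so $G$ is regular. (One could alternatively invoke the equality case of Proposition \ref{p:line02}, which also yields that $G$ is regular.) The main obstacle, though a mild one, is keeping the bookkeeping straight in the equality analysis — making sure that tightness of the summed inequality propagates correctly to tightness of each individual ingredient, which works because each ingredient is a genuine inequality in the same direction; the only nontrivial analytic input is the already-proven Lemma \ref{l:line3}.
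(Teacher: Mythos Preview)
Your proof is correct and follows essentially the same route as the paper: add the upper bounds from Proposition~\ref{p:0} and Proposition~\ref{p:line02} to get $\frac12 M_1(G)$, and for the lower bound apply Lemma~\ref{l:line3} with $t=\D-\d$ to replace $\frac{2m\sqrt{\D\d}}{\D+\d}$ by $\frac{2m\sqrt{(\D-1)(\d-1)}}{\D+\d-2}$ before summing. Your equality analysis is also equivalent; the paper opts to cite the equality case of Proposition~\ref{p:line02} for the lower bound, while you extract $\D=\d$ directly from the equality case $t^3+(2\d-1)t^2=0$ of Lemma~\ref{l:line3}, which is perfectly fine (and you note the paper's alternative as well).
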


\begin{proof}
Propositions \ref{p:0} and \ref{p:line02} give the upper bound.

Proposition \ref{p:line02} gives
$$
GA_1(\mathcal{L}(G))
\ge \frac{(M_1(G) - 2m)\sqrt{(\D-1)(\d-1)}}{\D + \d-2} \,.
$$

Proposition \ref{p:0} and Lemma \ref{l:line3} (with $\d+t=\D$) give
$$
GA_1(G)
\ge \frac{2m\sqrt{\D\d}}{\D + \d}
\ge \frac{2m\sqrt{(\D-1)(\d-1)}}{\D + \d-2} \,,
$$
and we obtain the lower bound.

The previous argument gives that if the equality in the upper bound is attained, then
$$
GA_1(G) = m,
\qquad
GA_1(\mathcal{L}(G)) = m_{\mathcal{L}(G)} .
$$
Thus, Proposition \ref{p:0} gives that $G$ is regular.

Also, if the equality in the lower bound is attained, then
$$
GA_1(G) = \frac{2\sqrt{(\D-1)(\d-1)}}{\D + \d-2} \, m,
\qquad
GA_1(\mathcal{L}(G))
= \frac{(M_1(G) - 2m)\sqrt{(\D-1)(\d-1)}}{\D + \d-2} \,,
$$
and Proposition \ref{p:line02} gives that $G$ is regular.

If $G$ is regular, then $\D-1=\d-1$,
$$
\frac{M_1(G) \sqrt{(\D-1)(\d-1)}}{\D + \d-2}
= \frac12 \, M_1(G)
= m + m_{\mathcal{L}(G)}
= GA_1(G) + GA_1(\mathcal{L}(G)) ,
$$
and the equality is attained.
\end{proof}

The following result provides a lower bound of $GA_1(\mathcal{L}(G))$ involving $GA_1(G)$.
We need a previous result.

\begin{lemma} \label{l:line7}
If a non-trivial connected graph $G$ is not isomorphic to a path graph, then $m \le m_{\mathcal{L}(G)}$.
\end{lemma}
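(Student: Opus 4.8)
The plan is to convert the inequality $m\le m_{\mathcal{L}(G)}$ into a statement about the degree sequence of $G$ alone, and then to treat trees and non‑trees separately. From the computation preceding Proposition~\ref{p:line02} we have $2m_{\mathcal{L}(G)}=M_1(G)-2m=\sum_{v\in V(G)}d_v^2-2m=\sum_{v\in V(G)}d_v(d_v-1)$. Subtracting $2m=\sum_{v}d_v$, the desired estimate $2m\le 2m_{\mathcal{L}(G)}$ is equivalent to
$$
\sum_{v\in V(G)}d_v(d_v-2)\ge 0 ,
$$
so the whole proof reduces to establishing this inequality for every non‑trivial connected graph $G$ that is not a path.

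First I would record the algebraic identity $\sum_{v}d_v(d_v-2)=\sum_{v}(d_v-2)^2+2\sum_{v}(d_v-2)=\sum_{v}(d_v-2)^2+4(m-n)$, where the last step uses the handshake lemma $\sum_v d_v=2m$. If $G$ is not a tree, then, $G$ being connected, $m\ge n$, and hence $\sum_{v}d_v(d_v-2)\ge\sum_{v}(d_v-2)^2\ge 0$; this disposes of every graph containing a cycle (with equality for $G\approx C_n$, consistent with $\mathcal{L}(C_n)\approx C_n$).

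It remains to handle the case in which $G$ is a tree, so $m=n-1$ and the identity becomes $\sum_{v}d_v(d_v-2)=\sum_{v}(d_v-2)^2-4$; the task is therefore to show $\sum_{v}(d_v-2)^2\ge 4$ whenever $G$ is a tree that is not a path. Here I would invoke the standard leaf count for trees: letting $n_i$ denote the number of vertices of degree $i$, the relations $\sum_i n_i=n$ and $\sum_i i\,n_i=2(n-1)$ give $\sum_i(i-2)n_i=-2$, i.e. $n_1=2+\sum_{i\ge 3}(i-2)n_i$. Since $G$ is a tree but not a path, it has a vertex of degree at least $3$, so $n_1\ge 3$. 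Each of these at least three leaves contributes $(1-2)^2=1$ to $\sum_v(d_v-2)^2$, and the vertex of degree $\ge 3$ contributes $(d_v-2)^2\ge 1$, whence $\sum_v(d_v-2)^2\ge 4$. (Equivalently, substituting $n_1=2+\sum_{i\ge 3}(i-2)n_i$ into $\sum_v d_v(d_v-2)=-n_1+\sum_{i\ge 3}i(i-2)n_i$ yields $\sum_v d_v(d_v-2)=-2+\sum_{i\ge 3}(i-1)(i-2)n_i\ge 0$, since some $n_i\ge 1$ with $i\ge 3$ and $(i-1)(i-2)\ge 2$ there.)

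The only place the hypothesis ``$G$ is not a path'' enters is the tree case, where it guarantees a vertex of degree $\ge 3$; this is the crux of the argument, albeit an elementary one. For paths the conclusion genuinely fails, since $\mathcal{L}(P_n)\approx P_{n-1}$ gives $m_{\mathcal{L}(P_n)}=n-2<n-1=m$, which explains why the exception is needed.
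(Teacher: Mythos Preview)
Your argument is correct and considerably slicker than the paper's. The paper proves the lemma by a structural decomposition: it separates out the vertices of degree at least $3$, observes that each such vertex $u$ contributes a clique $K_{d_u}$ to $\mathcal{L}(G)$ with $\binom{d_u}{2}\ge d_u$ edges, and then tracks the remaining path components of $G\setminus E^1(G)$ one by one, matching their edges against corresponding paths in $\mathcal{L}(G)$. Your route is purely degree-sequence based: you use the identity $2m_{\mathcal{L}(G)}=M_1(G)-2m$ to reduce the claim to $\sum_v d_v(d_v-2)\ge 0$, and then dispatch the non-tree case via $m\ge n$ and the tree case via the standard leaf-count identity $n_1=2+\sum_{i\ge 3}(i-2)n_i$. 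The paper's decomposition gives a bit more geometric insight into where the extra edges of $\mathcal{L}(G)$ live, but your argument is shorter, avoids any case analysis on the components $G_j$, and makes the equality case ($G\approx C_n$) transparent from $\sum_v(d_v-2)^2+4(m-n)=0$.
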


\begin{proof}
If $G$ is a cycle graph, then $\mathcal{L}(G) \approx G$ and $m = m_{\mathcal{L}(G)}$.
If $G$ is not isomorphic to either a path or a cycle graph, then $\D \ge 3$.

Denote by $V_3(G)$ the set of vertices in $u\in V(G)$ with degree $d_u\ge 3$.
If $u\in V_3(G)$, then the edges incident to $u$ correspond to a complete graph $\Gamma_u$ in $\mathcal{L}(G)$ with $d_u$ vertices and
$\frac12\,d_u(d_u-1) \ge d_u$ edges (note that if $u$ and $u'$ are different vertices in $V_3(G)$, then $E(\Gamma_u) \cap E(\Gamma_{u'})= \emptyset$).
Let us define
$E^1(G)=\{uv \in E(G)\,|\; u\in V_3(G)\}$ and $E^1(\mathcal{L}(G)) = \cup_{u\in V_3(G)} \Gamma_u$.
Then
$$
card\,E^1(\mathcal{L}(G))
= \sum_{u\in V_3(G)} card\,E(\G_u)
= \sum_{u\in V_3(G)} \frac12\,d_u(d_u-1)
\ge \sum_{u\in V_3(G)} d_u
\ge card\,E^1(G) .
$$

Let $\p V_3(G)$ be the set of vertices in $G$ at distance $1$ from $V_3(G)$.
Consider now the connected components $G_1,\dots,G_r$ of $G \setminus E^1(G)$.
We have $d_u \le 2$ for every $u \in V(G_j)$ and $1 \le j \le r$.
Since $G$ is not isomorphic to a path graph, then for each $1 \le j \le r$, $G_j$ is a path graph joining either two vertices in $\p V_3(G)$, or a vertex in $\p V_3(G)$ and a vertex with degree $1$.
Denote by $v_j^1, v_j^{2},\dots,v_j^{k_j}$ the vertices in $V(G_j)$ ordered in such a way that
$E(G_j)=\{v_j^1 v_j^{2},v_j^2 v_j^{3},\dots,v_j^{k_j-1} v_j^{k_j}\}$ and $v_jv_j^{k_j} \in E(G)$ for some $v_j \in V_3(G)$.
Furthermore, $d_{v_j^{2}}=\cdots= d_{v_j^{k_j}}=2$.

If $G_j$ is a path graph joining two vertices in $\p V_3(G)$, then $d_{v_j^{1}}=2$ and $v_j'v_j^{1} \in E(G)$ for some $v_j' \in V_3(G)$.
Let $\L_j$ be the set of edges in the path in $\mathcal{L}(G)$ with the $k_j+1$ vertices $\{v_j'v_j^1,v_j^1 v_j^{2},v_j^2 v_j^{3},\dots,v_j^{k_j-1} v_j^{k_j}, v_j^{k_j}v_j\}$.
Thus, $card\,\L_j = k_j=1 + card\,E(G_j) > card\,E(G_j)$.

If $G_j$ is a path graph joining a vertex in $\p V_3(G)$ and a vertex with degree $1$, then $d_{v_j^{1}}=1$.
Let $\L_j$ be the set of edges in the path in $\mathcal{L}(G)$ with the $k_j$ vertices $\{ v_j^1 v_j^{2},v_j^2 v_j^{3},\dots,v_j^{k_j-1} v_j^{k_j}, v_j^{k_j}v_j\}$.
Thus, $card\,\L_j = k_j-1 = card\,E(G_j)$.

Since $\L_i \cap \L_j = \emptyset$ for $1 \le i < j \le r$ and $\L_j \cap E^1(\mathcal{L}(G)) = \emptyset$ for $1 \le j \le r$, we have
$$
m_{\mathcal{L}(G)}
= card\,E^1(\mathcal{L}(G)) + \sum_{j=1}^r card\,\L_j
\ge card\,E^1(G) + \sum_{j=1}^r card\,E(G_j)
= m .
$$
\end{proof}

Lemma \ref{l:line7} has the following consequence.

\begin{corollary} \label{c:line7}
If $G$ is a non-trivial connected graph, then $\mathcal{L}(G)$ is a tree if and only if $G$ is isomorphic to a path graph.
\end{corollary}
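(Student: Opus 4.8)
The plan is to read off this corollary from Lemma \ref{l:line7}. For the ``if'' direction, suppose $G$ is isomorphic to a path graph; since $G$ is non-trivial it must be $P_n$ for some $n\ge 3$, and then one checks directly that $\mathcal{L}(P_n)\approx P_{n-1}$, which is a tree. (Here the non-triviality hypothesis is what excludes $G\approx P_2$, whose line graph is a single isolated vertex; with this hypothesis in force there is nothing delicate.)

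For the ``only if'' direction, assume $\mathcal{L}(G)$ is a tree. Since $G$ is connected and has at least one edge, $\mathcal{L}(G)$ is connected, so being a tree it satisfies $m_{\mathcal{L}(G)}=n_{\mathcal{L}(G)}-1$. Because the vertex set of $\mathcal{L}(G)$ is exactly $E(G)$, we have $n_{\mathcal{L}(G)}=m$, and hence $m_{\mathcal{L}(G)}=m-1<m$. By the contrapositive of Lemma \ref{l:line7}, $G$ cannot fail to be a path graph, so $G$ is isomorphic to a path graph, as desired.

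I do not anticipate any real obstacle: all of the substantive content is already packaged in Lemma \ref{l:line7}, and the two auxiliary facts used above --- that the line graph of a connected graph with an edge is connected, and that $n_{\mathcal{L}(G)}=m$ --- are standard. The only step worth stating explicitly is the count $m_{\mathcal{L}(G)}=m-1$ forced by ``$\mathcal{L}(G)$ is a tree'', which is precisely the inequality that Lemma \ref{l:line7} forbids for non-path graphs.
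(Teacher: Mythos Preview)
Your proof is correct and follows essentially the same approach as the paper: both directions use $n_{\mathcal{L}(G)}=m$ together with Lemma \ref{l:line7}, the only difference being that the paper phrases the ``only if'' direction as the contrapositive (if $G$ is not a path then $n_{\mathcal{L}(G)}\le m_{\mathcal{L}(G)}$, so $\mathcal{L}(G)$ is not a tree) while you take the contrapositive of the lemma after computing $m_{\mathcal{L}(G)}=m-1$.
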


\begin{proof}
Since $n_{\mathcal{L}(G)}=m$, if $G$ is not isomorphic to a path graph, then Lemma \ref{l:line7} gives $n_{\mathcal{L}(G)} \le m_{\mathcal{L}(G)}$ and $\mathcal{L}(G)$ is not a tree.
Reciprocally, if $G$ is isomorphic to a path graph $P_{n}$, then $\mathcal{L}(G)$ is isomorphic to the path graph $P_{n-1}$, which is a tree.
\end{proof}

\begin{theorem} \label{t:line6}
If $G$ is a non-trivial graph with maximum degree $\D$ and minimum degree $\d$, then
$$
GA_1(\mathcal{L}(G))
\ge \min \Big\{ \frac{3}{4\sqrt{2}}\,, \;\frac{2\sqrt{(2\D-2)\max \{2\d-2, 1\}}}{2\D-2 + \max \{2\d-2, 1\}} \, \Big\}\, GA_1(G) .
$$
The equality is attained for the path graph $P_3$.
\end{theorem}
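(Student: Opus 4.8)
My plan is to establish the inequality first for connected non-trivial graphs, treating separately the case in which $G$ is a path and the case in which it is not, and then to reduce the general case to this one by summing over connected components.

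Suppose first that $G$ is connected, non-trivial, and \emph{not} isomorphic to a path. Since $G$ is non-trivial, every edge $uv\in E(G)$ is adjacent to another edge, so $d_u+d_v\ge 3$; combining this with $d_u+d_v\ge 2\d$ gives $d_{uv}=d_u+d_v-2\ge \max\{2\d-2,1\}$, while always $d_{uv}\le 2\D-2$. Hence $\max\{2\d-2,1\}\le \d_{\mathcal{L}(G)}\le \D_{\mathcal{L}(G)}\le 2\D-2$. I would then apply Proposition~\ref{p:0} to the graph $\mathcal{L}(G)$ (which is legitimate, since $G$ non-trivial forces $\mathcal{L}(G)$ to have no isolated vertex), and bound the factor $\frac{2\sqrt{\D_{\mathcal{L}(G)}\d_{\mathcal{L}(G)}}}{\D_{\mathcal{L}(G)}+\d_{\mathcal{L}(G)}}$ from below, via Corollary~\ref{c:t}, by $C:=\frac{2\sqrt{(2\D-2)\max\{2\d-2,1\}}}{2\D-2+\max\{2\d-2,1\}}$, obtaining $GA_1(\mathcal{L}(G))\ge C\,m_{\mathcal{L}(G)}$. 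Finally Lemma~\ref{l:line7} gives $m_{\mathcal{L}(G)}\ge m$ and Proposition~\ref{p:0} gives $m\ge GA_1(G)$, so $GA_1(\mathcal{L}(G))\ge C\,GA_1(G)\ge \min\{\tfrac{3}{4\sqrt 2},C\}\,GA_1(G)$, as required.

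If instead $G$ is a path $P_n$ (necessarily $n\ge 3$ by non-triviality) then $\mathcal{L}(P_n)\approx P_{n-1}$, so it suffices to check $GA_1(P_{n-1})\ge \tfrac{3}{4\sqrt 2}\,GA_1(P_n)$. Using $GA_1(P_2)=1$ and $GA_1(P_k)=k-3+\tfrac{4\sqrt 2}{3}$ for $k\ge 3$, this reduces to a one-variable inequality that holds for all $n\ge 3$, with equality precisely at $n=3$ (both sides equal $1$); this is where the constant $\tfrac{3}{4\sqrt 2}$ and the equality case $P_3$ come from, and since $\tfrac{3}{4\sqrt 2}\ge\min\{\tfrac{3}{4\sqrt 2},C\}$ the bound follows. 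For a general non-trivial $G$ with connected components $G_1,\dots,G_k$ (of maximum and minimum degrees $\D_i,\d_i$), both $GA_1(\mathcal{L}(G))=\sum_i GA_1(\mathcal{L}(G_i))$ and $GA_1(G)=\sum_i GA_1(G_i)$, so I apply the two connected cases to each $G_i$; because $\D_i\le \D$ and $\d_i\ge \d$, the ratio $(2\D_i-2)/\max\{2\d_i-2,1\}$ does not exceed $(2\D-2)/\max\{2\d-2,1\}$, and since $g(x,y)=\frac{2\sqrt{xy}}{x+y}$ decreases as that ratio grows (Lemma~\ref{l:t}), we get $C(\D_i,\d_i)\ge C(\D,\d)$, hence $\min\{\tfrac{3}{4\sqrt 2},C(\D_i,\d_i)\}\ge \min\{\tfrac{3}{4\sqrt 2},C(\D,\d)\}$ for every $i$; summing the per-component inequalities yields the theorem, and the equality for $P_3$ is recorded above.

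The routine parts are the explicit evaluation of $GA_1(P_n)$ and the elementary path inequality. The two points requiring genuine care are: (i) justifying the bound $d_{uv}\ge \max\{2\d-2,1\}$ — the term $2\d-2$ is vacuous when $\d=1$, which is exactly why the $\max$ with $1$ appears and why non-triviality (every edge meeting another edge) must be used; and (ii) the monotonicity of $C(\D,\d)$ in both degree parameters that legitimizes the reduction to connected components. Neither is deep, but carrying the $\max\{2\d-2,1\}$ bookkeeping correctly through the whole chain of inequalities is the main thing to watch.
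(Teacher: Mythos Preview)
Your proof is correct and follows essentially the same route as the paper: split into the non-path connected case (Proposition~\ref{p:0} plus Corollary~\ref{c:t} on $\mathcal{L}(G)$, then Lemma~\ref{l:line7} and $GA_1(G)\le m$), the path case handled by the explicit formula $GA_1(P_k)=k-3+\tfrac{4\sqrt 2}{3}$, and the disconnected case by summing and using the monotonicity of $C(\D,\d)$. Your justifications of $\d_{\mathcal{L}(G)}\ge\max\{2\d-2,1\}$ and of the monotonicity step are in fact more explicit than the paper's; the only small point left implicit is that equality for $P_3$ in the full statement also needs $C(P_3)=\tfrac{2\sqrt 2}{3}>\tfrac{3}{4\sqrt 2}$, which is immediate.
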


\begin{proof}
Assume first that $G$ is a connected graph.

Recall that $\D_{\mathcal{L}(G)} \le 2\D-2$ and $\d_{\mathcal{L}(G)} \ge \max \{2\d-2,\,1\}$.
These inequalities, Proposition \ref{p:0} and Corollary \ref{c:t}
give
$$
GA_1(\mathcal{L}(G))
\ge \frac{2m_{\mathcal{L}(G)}\sqrt{\D_{\mathcal{L}(G)}\d_{\mathcal{L}(G)}}}{\D_{\mathcal{L}(G)} + \d_{\mathcal{L}(G)}}
\ge \frac{2m_{\mathcal{L}(G)}\sqrt{(2\D-2)\max \{2\d-2, 1\}}}{2\D-2 + \max \{2\d-2, 1\}}\, .
$$
Furthermore, Proposition \ref{p:0} gives $GA_1(G)\le m$

If $G$ is not isomorphic to a path graph, then Lemma \ref{l:line7} gives $m \le m_{\mathcal{L}(G)}$ and
$$
GA_1(\mathcal{L}(G))
\ge \frac{2 \sqrt{(2\D-2)\max \{2\d-2, 1\}}}{2\D-2 + \max \{2\d-2, 1\}}\, m
\ge \frac{2 \sqrt{(2\D-2)\max \{2\d-2, 1\}}}{2\D-2 + \max \{2\d-2, 1\}}\,GA_1(G) .
$$

If $G$ is isomorphic to the path graph $P_3$, then $GA_1(G) = \frac{4\sqrt{2}}{3}$ and $GA_1(\mathcal{L}(G)) = GA_1(P_2) = 1$.

If $G$ is isomorphic to the path graph $P_n$ with $n \ge 4$, then $GA_1(G) = n-3 + \frac{4\sqrt{2}}{3}$ and $GA_1(\mathcal{L}(G)) = GA_1(P_{n-1}) = n-4 + \frac{4\sqrt{2}}{3}$.

Hence, if $G$ is isomorphic to a path graph, we have
$$
\frac{GA_1(\mathcal{L}(G))}{GA_1(G)}
\ge \min \Big\{ \frac{3}{4\sqrt{2}}\,, \;\min_{n\ge 4} \frac{n-4 + \frac{4\sqrt{2}}{3}}{n-3 + \frac{4\sqrt{2}}{3}} \, \Big\}
= \min \Big\{ \frac{3}{4\sqrt{2}}\,, \; \frac{ \frac{4\sqrt{2}}{3}}{1 + \frac{4\sqrt{2}}{3}} \, \Big\}
= \frac{3}{4\sqrt{2}} \, .
$$

Therefore, the inequality holds for every non-trivial connected graph, and the equality is attained for the path graph $P_3$, since $\frac{3}{4\sqrt{2}}< \frac{2\sqrt{2}}{3}$.

Finally, assume that $G$ has connected components $G_1,\dots,G_k$.
Denote by $\D_j$ and $\d_j$ the maximum and minimum degree, respectively, of $G_j$ for $1 \le j \le k$.
Since $\D_j\le \D$ and $\d_j \ge \d_j$ for $1 \le j \le k$, Corollary \ref{c:t} gives
$$
\begin{aligned}
GA_1(\mathcal{L}(G))
& = \sum_{j=1}^{k} GA_1(\mathcal{L}(G_j))
\ge \sum_{j=1}^{k} \min \Big\{ \frac{3}{4\sqrt{2}}\,, \;\frac{2\sqrt{(2\D_j-2)\max \{2\d_j-2, 1\}}}{2\D_j-2 + \max \{2\d_j-2, 1\}} \, \Big\}\, GA_1(G_j)
\\
& \ge \sum_{j=1}^{k} \min \Big\{ \frac{3}{4\sqrt{2}}\,, \;\frac{2\sqrt{(2\D-2)\max \{2\d-2, 1\}}}{2\D-2 + \max \{2\d-2, 1\}} \, \Big\}\, GA_1(G_j)
\\
& = \min \Big\{ \frac{3}{4\sqrt{2}}\,, \;\frac{2\sqrt{(2\D-2)\max \{2\d-2, 1\}}}{2\D-2 + \max \{2\d-2, 1\}} \, \Big\}\, GA_1(G) .
\end{aligned}
$$
\end{proof}

We can improve the bound in Theorem \ref{t:line6} for a special class of graphs.

\begin{theorem} \label{t:line7}
Let $G$ be a non-trivial graph such that each connected component of $G$ is regular or biregular and is not isomorphic to $P_3$.
Then $GA_1(\mathcal{L}(G)) \ge GA_1(G)$,
and the equality is attained for every union of cycle graphs.
\end{theorem}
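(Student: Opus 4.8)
The plan is to reduce to the connected case and then assemble Lemma \ref{l:line7}, Proposition \ref{p:line0} and Proposition \ref{p:0}. Since $GA_1$, the number of edges, and the number of edges of the line graph are all additive over connected components (recall that $\mathcal{L}$ of a disjoint union is the disjoint union of the line graphs), and each connected component $G_j$ of $G$ is itself non-trivial, regular or biregular, and not isomorphic to $P_3$, it suffices to prove $GA_1(\mathcal{L}(G_j)) \ge GA_1(G_j)$ for each such connected $G_j$ and then sum over $j$.

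So assume $G$ is connected, non-trivial, regular or biregular, and $G \not\approx P_3$. The first step I would take is to observe that $G$ is not a path graph: for $n \ge 4$ the path $P_n$ is neither regular nor biregular (in its bipartition one class contains both an endpoint, of degree $1$, and an interior vertex, of degree $2$), $P_3$ is excluded by hypothesis, and $P_2$ is excluded since $G$ is non-trivial. Hence Lemma \ref{l:line7} applies and gives $m \le m_{\mathcal{L}(G)}$. Next, since $G$ is connected, non-trivial and regular or biregular, Proposition \ref{p:line0} shows that $\mathcal{L}(G)$ is regular; moreover $\mathcal{L}(G)$ is connected and has at least one edge (a connected simple graph with at least two edges has two adjacent edges), so Proposition \ref{p:0} gives $GA_1(\mathcal{L}(G)) = m_{\mathcal{L}(G)}$. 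Combining this with $m \le m_{\mathcal{L}(G)}$ and with the bound $GA_1(G) \le m$ from Proposition \ref{p:0} yields
$$
GA_1(\mathcal{L}(G)) = m_{\mathcal{L}(G)} \ge m \ge GA_1(G),
$$
which is the assertion in the connected case; summing over components finishes the general inequality.

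For the equality statement, if $G$ is a disjoint union of cycles $C_{n_1},\dots,C_{n_k}$ then $\mathcal{L}(C_{n_i}) \approx C_{n_i}$ for each $i$, hence $GA_1(\mathcal{L}(G)) = \sum_{i=1}^k GA_1(C_{n_i}) = GA_1(G)$.

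The argument is mostly bookkeeping with results already proved; the only place needing a genuine (if short) verification is the claim that a connected regular-or-biregular graph other than $P_2$ and $P_3$ is not a path graph. This is precisely what makes Lemma \ref{l:line7} available, and without it the chain of inequalities breaks down (indeed for $P_3$ one has $GA_1(\mathcal{L}(P_3)) = 1 < \tfrac{4\sqrt2}{3} = GA_1(P_3)$, which is why that case must be excluded).
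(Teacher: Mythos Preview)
Your argument is correct and is in fact cleaner than the paper's. Both proofs reduce to the connected case and use that $\mathcal{L}(G)$ is regular (Proposition~\ref{p:line0}), so that $GA_1(\mathcal{L}(G))=m_{\mathcal{L}(G)}$. The paper then splits into three subcases: $G$ regular; $G$ biregular with $\delta\ge 2$ (handled by an explicit algebraic computation showing $\frac{2m\sqrt{\Delta\delta}}{\Delta+\delta}\le \frac12 M_1(G)-m$); and $G$ a star with $\delta=1$ (handled by a direct count). You avoid all of this by making a single observation: a connected regular-or-biregular graph that is non-trivial and not $P_3$ cannot be any path $P_n$, so Lemma~\ref{l:line7} applies uniformly to give $m\le m_{\mathcal{L}(G)}$, and then $GA_1(G)\le m\le m_{\mathcal{L}(G)}=GA_1(\mathcal{L}(G))$ finishes at once. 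Your route thus replaces the paper's case analysis and computation with one structural remark, at no cost in rigor; the only thing the paper's longer biregular computation buys is a slightly sharper intermediate inequality (comparing $GA_1(G)$ directly to $m_{\mathcal{L}(G)}$ rather than passing through $m$), which is not needed for the statement.
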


\begin{proof}
By linearity, without loss of generality we can assume that $G$ is connected.
Thus, $G$ is a regular or biregular graph, and $\mathcal{L}(G)$ is a regular graph.

Assume first that $G$ is a regular graph.
Since $G$ is a non-trivial graph, we have $\d\ge 2$.
Since path graphs are not regular, Proposition \ref{p:0} and Lemma \ref{l:line7} give
$GA_1(G) = m \le m_{\mathcal{L}(G)} = GA_1(\mathcal{L}(G))$.

Assume now that $G$ is a $(\D,\d)$-biregular graph.
Denote by $n_1$ and $n_2$ the number of vertices of $G$ with degree $\d$ and $\D$, respectively.
Thus $2m=n_1\d + n_2\D$ and $M_1(G) = n_1\d^2 + n_2\D^2$.

Suppose $\d \ge 2$, then $2(n_1\d + n_2\D) \le n_1\d^2 + n_2\D^2$.
Since $2\sqrt{\D\d} \le \D + \d$,
$$
\begin{aligned}
(n_1\d + n_2\D)(2\sqrt{\D\d} + \D + \d)
& \le (n_1\d + n_2\D)2(\D + \d)
\le (n_1\d^2 + n_2\D^2)(\D + \d) ,
\\
(n_1\d + n_2\D)\frac{2\sqrt{\D\d} + \D + \d}{\D + \d}
& \le n_1\d^2 + n_2\D^2 ,
\\
\frac12\,(n_1\d + n_2\D)\frac{2\sqrt{\D\d}}{\D + \d}
& \le \frac12\,(n_1\d^2 + n_2\D^2) - \frac12\,(n_1\d + n_2\D) ,
\\
GA_1(G) = \frac{2m\sqrt{\D\d}}{\D + \d}
& \le \frac12\, M_1(G) - m
\le m_{\mathcal{L}(G)} = GA_1(\mathcal{L}(G)) .
\end{aligned}
$$

Finally, assume $\d =1$.
Since $G$ is connected, $G$ is isomorphic to the star graph with $n$ vertices $S_n$.
Since $G$ is not isomorphic to $P_3=S_3$, we have $n \ge 4$ and $2 \le n-2$.
Then $\mathcal{L}(G)$ is isomorphic to the complete graph $K_{n-1}$ and
$$
GA_1(G) \le m = n-1
\le \frac12\, (n-1)(n-2)
= m_{\mathcal{L}(G)} = GA_1(\mathcal{L}(G)) .
$$

If $G$ is a cycle graph, then $\mathcal{L}(G) \approx G$ and the equality is attained.
\end{proof}

Note that the inequality $GA_1(G) \le GA_1(\mathcal{L}(G))$ does not hold for path graphs, since
$\mathcal{L}(P_{n}) = P_{n-1}$.

\medskip

In \cite{MH} and \cite{VF} (see also \cite[p.609-610]{DGF}) appear the following inequalities:
\begin{equation} \label{eq23}
GA_1(G) \ge \frac{2(n-1)^{3/2}}{n}\, ,
\qquad
GA_1(G) \ge \frac{2m}{n}\, .
\end{equation}

These inequalities are improved by \cite[Theorem 2.4]{RS2}:

\begin{theorem} \label{t:end}
If $G$ is a graph with $n$ vertices and $m$ edges, then
$$
GA_1(G) \ge \frac{2m\sqrt{n-1}}{n}\,,
$$
and the equality is attained if and only if $G$ is a star graph.
\end{theorem}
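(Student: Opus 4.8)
The plan is to bound each term of the sum defining $GA_1(G)$ by the constant $\frac{2\sqrt{n-1}}{n}$ via Corollary \ref{c:t}, and then to determine when all of these bounds are simultaneously tight. First I would recall that
$$
GA_1(G) = \sum_{uv\in E(G)} \frac{2\sqrt{d_u d_v}}{d_u + d_v}\,,
$$
and note that for every edge $uv\in E(G)$ both endpoints have a neighbour, so $1\le d_u,d_v$, while trivially $d_u,d_v\le n-1$. Applying Corollary \ref{c:t} with $a=1$ and $b=n-1$ to $g(d_u,d_v)=\frac{2\sqrt{d_u d_v}}{d_u+d_v}$ gives
$$
\frac{2\sqrt{d_u d_v}}{d_u + d_v} \ge \frac{2\sqrt{1\cdot(n-1)}}{1+(n-1)} = \frac{2\sqrt{n-1}}{n}\,.
$$
Summing over the $m$ edges yields $GA_1(G)\ge \frac{2m\sqrt{n-1}}{n}$, which is the desired inequality.

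For the equality statement, equality in the summed bound forces every term to equal $\frac{2\sqrt{n-1}}{n}$. If $n=2$ this is automatic, and the only graph on $2$ vertices with an edge is $P_2=S_2$, a star graph. So assume $n\ge 3$, in which case $a=1\ne n-1=b$ and the equality clause of Corollary \ref{c:t} says that for every edge $uv$ one endpoint has degree $1$ and the other has degree $n-1$. Choosing an edge $uv$ with, say, $d_u=n-1$, the vertex $u$ is adjacent to all $n-1$ other vertices; applying the equality condition to each edge $uw$ and using $d_u=n-1\ne 1$ forces $d_w=1$ for every $w\ne u$. Hence $G\cong S_n$. Conversely, for $G=S_n$ one has $m=n-1$ and $GA_1(S_n)=(n-1)\cdot\frac{2\sqrt{n-1}}{n}=\frac{2m\sqrt{n-1}}{n}$, so equality holds; this also shows the inequality recovers and strengthens $GA_1(G)\ge 2m/n$ from \eqref{eq23}.

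I do not expect a serious obstacle here: the argument is essentially a termwise estimate plus a short structural deduction. The only points requiring mild care are the degenerate case $n=2$, where $a=b$ in Corollary \ref{c:t} and the ``biregular'' alternative collapses, and the verification that the local condition ``every edge joins a degree-$1$ vertex to a degree-$(n-1)$ vertex'' forces the star globally — which is immediate once one observes that a vertex of degree $n-1$ dominates $G$, making $G$ connected and pinning down all remaining degrees.
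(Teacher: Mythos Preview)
Your argument is correct. Note, however, that the present paper does not actually prove Theorem~\ref{t:end}: it is quoted verbatim from \cite[Theorem 2.4]{RS2}, so there is no in-paper proof to compare against. Your termwise application of Corollary~\ref{c:t} with $a=1$ and $b=n-1$ is the natural route and almost certainly coincides with the one in \cite{RS2}, since that corollary is itself taken from that source; the equality analysis (including the degenerate case $n=2$ and the observation that a vertex of degree $n-1$ forces connectedness) is complete.
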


The fact $n_{\mathcal{L}(G)} = m$ and \eqref{eq23} have the following consequence.

\begin{corollary} \label{p:line8}
If $G$ is a non-trivial graph with $m$ edges, then
$$
GA_1(\mathcal{L}(G)) \ge \frac{2(m-1)^{3/2}}{m} \,.
$$
\end{corollary}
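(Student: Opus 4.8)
The plan is simply to apply the first inequality in \eqref{eq23} to the line graph $\mathcal{L}(G)$ itself. Recall that \eqref{eq23} asserts that any graph $H$ in the class considered here (every connected component having at least one edge) on $n_H$ vertices satisfies $GA_1(H) \ge 2(n_H-1)^{3/2}/n_H$. Taking $H=\mathcal{L}(G)$ and using the elementary identity $n_{\mathcal{L}(G)}=m$ gives at once
$$
GA_1(\mathcal{L}(G)) \ge \frac{2\bigl(n_{\mathcal{L}(G)}-1\bigr)^{3/2}}{n_{\mathcal{L}(G)}} = \frac{2(m-1)^{3/2}}{m}\,,
$$
which is the claimed bound.

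The only point that deserves a word of justification is that $\mathcal{L}(G)$ genuinely belongs to the class of graphs for which \eqref{eq23} is stated, i.e.\ that each connected component of $\mathcal{L}(G)$ contains at least one edge. This is exactly where the hypothesis that $G$ is non-trivial is used: each connected component of $G$ has at least two edges, and since such a component is connected it contains two edges sharing a common vertex; these two edges correspond to a pair of adjacent vertices in the associated component of $\mathcal{L}(G)$, so that component has an edge. Hence $n_{\mathcal{L}(G)}=m$ and \eqref{eq23} are both legitimately available for $\mathcal{L}(G)$.

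I do not expect any real obstacle: the statement is a one-line consequence of \eqref{eq23} once the bookkeeping about $n_{\mathcal{L}(G)}$ and the non-triviality assumption has been recorded. (Note that the second inequality in \eqref{eq23}, applied to $\mathcal{L}(G)$, would only give $GA_1(\mathcal{L}(G))\ge 2m_{\mathcal{L}(G)}/m$, which is too weak in general, e.g.\ when $G$ is a path; so it is the first inequality in \eqref{eq23} that provides the clean route.)
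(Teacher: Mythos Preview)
Your argument is exactly the paper's: it derives the corollary in one line from the identity $n_{\mathcal{L}(G)}=m$ together with the first inequality in \eqref{eq23}, and your extra paragraph justifying that $\mathcal{L}(G)$ lies in the admissible class (via the non-triviality hypothesis) is a welcome elaboration of what the paper leaves implicit. One small caveat worth recording: the bound $GA_1(H)\ge 2(n_H-1)^{3/2}/n_H$ in \eqref{eq23} ultimately rests on $m_H\ge n_H-1$ (cf.\ Theorem~\ref{t:end}), hence on connectedness of $H$, so the clean route you describe really needs $G$ (and thus $\mathcal{L}(G)$) connected; the paper's one-line justification has the same tacit assumption.
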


This result can be improved for almost every graph.
In order to do it, we need the following technical result.

\begin{lemma} \label{l:line8}
For $x_1,\dots,x_k \ge 2$ we have
$$
\sum_{j=1}^k \sqrt{x_j-1} \, \ge \sqrt{\sum_{j=1}^k x_j-1} \,.
$$
\end{lemma}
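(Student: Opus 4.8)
The plan is to reduce the statement to an elementary inequality among the cross terms obtained after squaring. First I would substitute $y_j := x_j-1$, so that the hypothesis becomes $y_j\ge 1$ for every $j$ and the claim reads
$$
\sum_{j=1}^k \sqrt{y_j} \;\ge\; \sqrt{\,\sum_{j=1}^k y_j + k-1\,}\,.
$$
Since both sides are nonnegative, this is equivalent to the inequality obtained by squaring, namely
$$
\sum_{j=1}^k y_j + 2\!\!\sum_{1\le i<j\le k}\!\! \sqrt{y_i y_j} \;\ge\; \sum_{j=1}^k y_j + k-1 ,
$$
that is, $2\sum_{1\le i<j\le k}\sqrt{y_i y_j}\ge k-1$.

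Next I would use the hypothesis in the form $y_i y_j\ge 1$, so that $\sqrt{y_i y_j}\ge 1$ for every pair $i<j$, and therefore
$$
2\!\!\sum_{1\le i<j\le k}\!\! \sqrt{y_i y_j} \;\ge\; 2\binom{k}{2} \;=\; k(k-1) \;\ge\; k-1 ,
$$
the last inequality being valid for every $k\ge 1$. This proves the lemma; for $k=1$ the left sum is empty and both sides of the original statement equal $\sqrt{x_1-1}$, consistently with the chain above.

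I do not expect a genuine obstacle here: the only points requiring (minimal) care are noting the nonnegativity needed in order to square the inequality and checking the degenerate case $k=1$. An equally short alternative is induction on $k$, where the inductive step reduces to $\sqrt{a}+\sqrt{b}\ge\sqrt{a+b+1}$ for $a,b\ge 1$ (which follows from $2\sqrt{ab}\ge 2\ge 1$ after squaring), together with the fact that $\sum_{j=1}^k x_j-1\ge 2k-1\ge 1$; I would adopt whichever presentation reads more smoothly in context.
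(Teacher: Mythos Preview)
Your proof is correct. The paper takes a different route: it defines
\[
g(x_1,\dots,x_k)=\sum_{j=1}^k \sqrt{x_j-1}-\sqrt{\sum_{j=1}^k x_j-1}
\]
and observes that each partial derivative
\[
\frac{\partial g}{\partial x_i}=\frac{1}{2\sqrt{x_i-1}}-\frac{1}{2\sqrt{\sum_{j=1}^k x_j-1}}
\]
is nonnegative on the region $x_j\ge 2$, so $g$ is minimized at the corner $(2,\dots,2)$, where $g=k-\sqrt{2k-1}\ge 0$. Your argument is purely algebraic: after the substitution $y_j=x_j-1$ and squaring, you reduce everything to the trivial bound $\sqrt{y_iy_j}\ge 1$ on each cross term, obtaining $2\binom{k}{2}=k(k-1)\ge k-1$. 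This avoids any calculus and is in fact sharper in the sense that it shows the squared slack is at least $k(k-1)-(k-1)=(k-1)^2$, whereas the paper's reduction to the boundary case yields the slack $k-\sqrt{2k-1}$ only at that single point. Either argument is perfectly adequate here; yours is the more elementary and self-contained of the two.
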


\begin{proof}
Consider the function
$$
g(x_1,\dots,x_k)
= \sum_{j=1}^k \sqrt{x_j-1}  - \sqrt{\sum_{j=1}^k x_j-1} \,,
$$
with $x_j \ge 2$ for every $1\le j \le k$.
Since
$$
\frac{\p g}{\p x_i}(x_1,\dots,x_k)
= \frac1{2\sqrt{x_i-1}}  - \frac1{2\sqrt{\sum_{j=1}^k x_j-1}} \,.
$$
and $x_j \ge 2$, we have $\p g/\p x_i \ge 0$ for every $1\le i \le k$,
and we conclude
$$
g(x_1,\dots,x_k)
\ge g(2,\dots,2)
= k - \sqrt{2k-1} \ge 0 .
$$
\end{proof}

\begin{theorem} \label{t:line8}
Let $G$ be a non-trivial graph with $m$ edges
such that each connected component  of $G$ is not isomorphic to a path graph $P_n$ with $n\le 6$.
Then
$$
GA_1(\mathcal{L}(G)) \ge 2\sqrt{m-1} \,.
$$
\end{theorem}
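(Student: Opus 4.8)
The plan is to reduce to connected graphs and then split into two cases according to whether the component is a path. First I would use linearity over connected components: if $G$ has components $G_1,\dots,G_k$ with $m_j=|E(G_j)|$, then $GA_1(\mathcal{L}(G))=\sum_{j=1}^k GA_1(\mathcal{L}(G_j))$ and $m=\sum_{j=1}^k m_j$, and each $m_j\ge 2$ because $G$ is non-trivial. So if we can show $GA_1(\mathcal{L}(G_j))\ge 2\sqrt{m_j-1}$ for every component, then Lemma \ref{l:line8} (applied with $x_j=m_j$) gives $GA_1(\mathcal{L}(G))\ge 2\sum_j\sqrt{m_j-1}\ge 2\sqrt{\sum_j m_j-1}=2\sqrt{m-1}$. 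Hence from now on we may assume $G$ is connected, non-trivial, and not isomorphic to $P_n$ with $n\le 6$.

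In the case where $G$ is \emph{not} a path graph, I would apply Theorem \ref{t:end} to the graph $\mathcal{L}(G)$, which has $n_{\mathcal{L}(G)}=m$ vertices and $m_{\mathcal{L}(G)}$ edges; this gives $GA_1(\mathcal{L}(G))\ge 2m_{\mathcal{L}(G)}\sqrt{m-1}/m$. Since $G$ is not a path, Lemma \ref{l:line7} yields $m_{\mathcal{L}(G)}\ge m$, and therefore $GA_1(\mathcal{L}(G))\ge 2\sqrt{m-1}$, as desired. In the remaining case $G\approx P_n$, the hypothesis forces $n\ge 7$, so $m=n-1$, $\mathcal{L}(G)\approx P_{n-1}$, and $GA_1(\mathcal{L}(G))=GA_1(P_{n-1})=n-4+\frac{4\sqrt2}{3}$; it then remains only to verify the one-variable inequality $n-4+\frac{4\sqrt2}{3}\ge 2\sqrt{n-2}$ for $n\ge 7$. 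This follows because the difference of the two sides has derivative $1-(n-2)^{-1/2}>0$ on $[7,\infty)$ and is positive at $n=7$, where it equals $3+\frac{4\sqrt2}{3}-2\sqrt5>0$.

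I do not anticipate a genuine obstacle: the whole argument is essentially a single application of Theorem \ref{t:end} to $\mathcal{L}(G)$ combined with the edge count $m_{\mathcal{L}(G)}\ge m$ from Lemma \ref{l:line7}, plus an elementary estimate for paths. The only points needing a little care are (i) recognizing that Theorem \ref{t:end} should be used for $\mathcal{L}(G)$ rather than for $G$ (and that $\mathcal{L}(G)$ is a legitimate graph with no isolated vertices, thanks to non-triviality); (ii) that the reduction to components via Lemma \ref{l:line8} requires $m_j\ge 2$, which is exactly the non-triviality assumption; and (iii) that the small paths $P_n$ with $n\le 6$ must be excluded because the path computation fails there — for instance $GA_1(\mathcal{L}(P_6))=GA_1(P_5)=2+\frac{4\sqrt2}{3}<4=2\sqrt{5-1}$ — so the hypothesis of the theorem is sharp in this respect.
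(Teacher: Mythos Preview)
Your proposal is correct and follows essentially the same route as the paper: reduce to connected components via Lemma~\ref{l:line8} (using $m_j\ge 2$ from non-triviality), apply Theorem~\ref{t:end} to $\mathcal{L}(G)$ together with Lemma~\ref{l:line7} for the non-path case, and handle $G\approx P_n$ with $n\ge 7$ by the direct computation $GA_1(P_{n-1})=n-4+\tfrac{4\sqrt2}{3}$. The only cosmetic difference is that the paper does the component reduction at the end rather than the beginning; your added remark that the exclusion of $P_n$ with $n\le 6$ is sharp is a nice touch not in the paper.
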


\begin{proof}
Assume first that $G$ is a connected graph.
Theorem \ref{t:end} gives
$$
GA_1(\mathcal{L}(G))
\ge \frac{2m_{\mathcal{L}(G)} \sqrt{n_{\mathcal{L}(G)}-1}}{n_{\mathcal{L}(G)}}
= \frac{2m_{\mathcal{L}(G)} \sqrt{m-1}}{m}\,.
$$
By Lemma \ref{l:line7},
if $G$ is not isomorphic to a path graph, then
$m_{\mathcal{L}(G)} \ge m$, and the conclusion holds.

Assume now that $G$ is isomorphic to a path graph $P_n$ with $n \ge 7$.
Therefore, $\mathcal{L}(G)=P_{n-1}=P_m$ with $m \ge 6$.
Then the inequality is equivalent to
$m-3+\frac{4\sqrt{2}}{3} \ge 2\sqrt{m-1} \,,$
and one can easily check that it holds for every $m \ge 6$.

Finally, assume that $G$ has connected components $G_1,\dots,G_k$ with $m_1,\dots,m_k$ edges, respectively.
We have proved that
$$
GA_1(\mathcal{L}(G))
= \sum_{j=1}^k GA_1(\mathcal{L}(G_j))
\ge 2 \sum_{j=1}^k \sqrt{m_j-1} \,.
$$
Since $G_j$ is a non-trivial connected graph which is not isomorphic to $P_3$, we have $m_j \ge 2$ for every $1 \le j \le k$.
Thus, Lemma \ref{l:line8} gives
$$
GA_1(\mathcal{L}(G))
\ge 2 \sum_{j=1}^k \sqrt{m_j-1}
\ge 2 \sqrt{m -1} \,.
$$
\end{proof}

Note that Theorems \ref{t:end} and \ref{t:line8} and Corollary \ref{p:line8} provide another Nordhaus-Gaddum-type result for the geometric-arithmetic index of a graph and its line graph.

\smallskip

In the paper \cite{GT}, where Zagreb indices were introduced, the \emph{forgotten topological index} (or \emph{F-index}) is defined as
$$
F(G) = \sum_{u \in V(G)} d_u^3.
$$
Both the forgotten
topological index and the first Zagreb index were employed in the formulas for total $\pi$-electron energy in \cite{GT},
as a measure of branching extent of the carbon-atom skeleton of the underlying molecule.
However, this index never got attention
except recently, when Furtula and Gutman in \cite{3} established some basic properties
of the F-index and showed that its predictive ability is almost similar to
that of first Zagreb index and for the entropy and acetic factor, both of them yield correlation
coefficients greater than $0.95$.
Besides, \cite{3} pointed out the importance of the F-index: it can be used to obtain a high accuracy of the prediction of logarithm of the
octanol-water partition coefficient (see also \cite{5}).
The extremal trees with respect to the F-index have been investigated in \cite{5}.
Furthermore, several papers contain more lower and upper bounds for the forgotten index (see, e.g., \cite{ChCh}, \cite{RS4}).

\begin{theorem} \label{t:line10}
If $G$ is a non-trivial graph with $m$ edges, then
$$
M_1(\mathcal{L}(G))
= 4m - 4M_1(G) +2 M_2(G) + F(G) .
$$
\end{theorem}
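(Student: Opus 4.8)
The plan is to compute $M_1(\mathcal{L}(G)) = \sum_{a \in V(\mathcal{L}(G))} d_a^2$ by translating the sum over vertices of $\mathcal{L}(G)$ into a sum over edges of $G$, using the basic identity $d_a = d_u + d_v - 2$ whenever the vertex $a$ of $\mathcal{L}(G)$ corresponds to the edge $uv$ of $G$ (recalled at the beginning of Section 3). Thus
$$
M_1(\mathcal{L}(G)) = \sum_{uv \in E(G)} (d_u + d_v - 2)^2 .
$$
First I would expand the square: $(d_u + d_v - 2)^2 = d_u^2 + d_v^2 + 2 d_u d_v - 4 d_u - 4 d_v + 4$. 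Then I would sum each of the six resulting terms separately over $uv \in E(G)$ and identify each piece with a known graph invariant.

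The key observation for three of the terms is the standard \emph{handshake-type} identity $\sum_{uv \in E(G)} (d_u + d_v) = \sum_{u \in V(G)} d_u^2 = M_1(G)$, which follows by counting, for each vertex $u$, that $d_u$ appears once for each of the $d_u$ edges incident to $u$. Applying this: the term $\sum_{uv} (d_u^2 + d_v^2)$ equals $\sum_{u \in V(G)} d_u \cdot d_u^2 = \sum_{u} d_u^3 = F(G)$; the term $-4\sum_{uv}(d_u + d_v)$ equals $-4 M_1(G)$; and the constant term $\sum_{uv} 4 = 4m$. The remaining middle term $2\sum_{uv} d_u d_v$ is by definition $2 M_2(G)$. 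Collecting everything gives
$$
M_1(\mathcal{L}(G)) = F(G) - 4 M_1(G) + 4m + 2 M_2(G),
$$
which is precisely the claimed formula.

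There is no serious obstacle here; the only point requiring a little care is the bookkeeping identity $\sum_{uv \in E(G)} d_u^2 + d_v^2 = \sum_{u \in V(G)} d_u^3$ (and the analogous one with $d_u^2 + d_v^2$ replaced by $d_u + d_v$), which I would justify by the incidence-counting argument above rather than by any deeper result. The non-triviality hypothesis on $G$ is only needed so that $\mathcal{L}(G)$ has at least one vertex and all quantities are well-defined; it plays no role in the algebra.
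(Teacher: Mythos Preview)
Your proof is correct and is essentially identical to the paper's own argument: both start from $M_1(\mathcal{L}(G))=\sum_{uv\in E(G)}(d_u+d_v-2)^2$, expand the square, and identify the four resulting sums with $F(G)$, $2M_2(G)$, $-4M_1(G)$, and $4m$ via the standard incidence-counting identities. If anything, you spell out the justification for $\sum_{uv\in E(G)}(d_u^2+d_v^2)=\sum_{u\in V(G)} d_u^3$ more explicitly than the paper does.
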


\begin{proof}
Since the vertex in $\mathcal{L}(G)$ corresponding to $uv\in E(G)$ has degree $d_u + d_v - 2$, we have
$$
\begin{aligned}
M_1(\mathcal{L}(G))
& = \sum_{uv\in E(G)}(d_u + d_v - 2)^2
\\
& = \sum_{uv\in E(G)}(d_u^2 + d_v^2) +2 \!\!\!\! \sum_{uv\in E(G)} d_u d_v - 4 \!\!\!\! \sum_{uv\in E(G)}(d_u + d_v) + \!\!\! \sum_{uv\in E(G)} 4
\\
& = \sum_{u\in V(G)} d_u^3 +2 M_2(G) - 4M_1(G) + 4m
\\
& = F(G) +2 M_2(G) - 4M_1(G) + 4m .
\end{aligned}
$$
\end{proof}

Since $F(G) \ge \d M_1(G)$,
Theorem \ref{t:line10} has the following consequence.

\begin{corollary} \label{c:line10}
If $G$ is a non-trivial graph with $m$ edges and minimum degree $\d$, then
$$
M_1(\mathcal{L}(G))
\ge 4m + (\d- 4)M_1(G) +2 M_2(G)  .
$$
\end{corollary}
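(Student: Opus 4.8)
The plan is to deduce this directly from the identity in Theorem \ref{t:line10} together with the elementary estimate $F(G)\ge \d\, M_1(G)$ that is quoted just before the statement. First I would recall that Theorem \ref{t:line10} gives, for any non-trivial graph $G$ with $m$ edges,
$$
M_1(\mathcal{L}(G)) = 4m - 4M_1(G) + 2M_2(G) + F(G).
$$
So the only thing left is to bound $F(G)$ from below in terms of $M_1(G)$ and substitute.

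For the bound on $F(G)$, I would argue pointwise on vertex degrees: since $\d$ is the minimum degree, we have $d_u\ge \d$ for every $u\in V(G)$, hence $d_u^3 = d_u\cdot d_u^2 \ge \d\, d_u^2$. Summing over $u\in V(G)$ yields
$$
F(G) = \sum_{u\in V(G)} d_u^3 \ge \d \sum_{u\in V(G)} d_u^2 = \d\, M_1(G).
$$
Plugging this into the displayed identity gives
$$
M_1(\mathcal{L}(G)) \ge 4m - 4M_1(G) + 2M_2(G) + \d\, M_1(G) = 4m + (\d-4)M_1(G) + 2M_2(G),
$$
which is exactly the claimed inequality.

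There is essentially no obstacle here; the proof is a one-line substitution once Theorem \ref{t:line10} is in hand, and the auxiliary inequality $F(G)\ge \d\, M_1(G)$ is immediate from $d_u\ge\d$. If one wished to record the equality case, note that equality holds if and only if $d_u^3 = \d\, d_u^2$ for every $u$, i.e. $G$ is $\d$-regular; but since the statement as given only asserts the inequality, the argument above suffices.
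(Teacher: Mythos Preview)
Your proof is correct and follows exactly the paper's approach: the corollary is stated as an immediate consequence of Theorem~\ref{t:line10} together with the inequality $F(G)\ge \d\, M_1(G)$, and your substitution carries this out verbatim.
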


The next result appears in \cite{gaCadiz}.

\begin{lemma} \label{t:pi1tris}
If $\a>0$ and $G$ is a graph with $m$ edges, maximum degree $\D$ and minimum degree $\d$, then
$$
GA_1(G)
\ge \frac{2^{1/(2\a)} \d^{1/2}m^{(2\a+1)/(2\a)}}{\D \, M_1^{1-\a}(G)^{1/(2\a)}}\,,
$$
and the equality holds for some $\a$ if and only if $G$ is regular.
\end{lemma}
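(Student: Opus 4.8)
The plan is to collapse the bound to a single inequality for the edge-indexed sequence $y_e:=\sqrt{d_u d_v}$ (where $e=uv\in E(G)$), and then to invoke Jensen's inequality for the convex function $x\mapsto x^{-\a}$ on $(0,\infty)$.

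The first move is to remove the denominators in $GA_1$ using $d_u+d_v\le 2\D$:
$$
GA_1(G)=\sum_{uv\in E(G)}\frac{2\sqrt{d_u d_v}}{d_u+d_v}\ \ge\ \frac1\D\sum_{uv\in E(G)}\sqrt{d_u d_v}=\frac1\D\sum_{e}y_e .
$$
The second move is to rewrite the variable Zagreb index as an edge sum — since every vertex $u$ lies on exactly $d_u$ edges, $M_1^{1-\a}(G)=\sum_{u\in V(G)}d_u^{1-\a}=\sum_{uv\in E(G)}\big(d_u^{-\a}+d_v^{-\a}\big)$ — and to apply AM--GM termwise, $d_u^{-\a}+d_v^{-\a}\ge 2(d_u d_v)^{-\a/2}=2y_e^{-\a}$, so that $M_1^{1-\a}(G)\ge 2\sum_e y_e^{-\a}$. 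Raising the first display to the power $2\a$ and multiplying by the second, the desired inequality (which after rearrangement reads $GA_1(G)^{2\a}\,\D^{2\a}\,M_1^{1-\a}(G)\ge 2\d^{\a}m^{2\a+1}$) will follow once we prove
$$
\Big(\sum_{e}y_e\Big)^{2\a}\Big(\sum_{e}y_e^{-\a}\Big)\ \ge\ \d^{\a}m^{2\a+1}.
$$

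For this last inequality I would use two facts: $y_e=\sqrt{d_u d_v}\ge\d$ for every edge, hence $\sum_e y_e\ge m\d$; and Jensen's inequality for the convex map $x\mapsto x^{-\a}$, which gives $\frac1m\sum_e y_e^{-\a}\ge\big(\frac1m\sum_e y_e\big)^{-\a}$, i.e. $\sum_e y_e^{-\a}\ge m^{1+\a}\big(\sum_e y_e\big)^{-\a}$. Multiplying these, $\big(\sum_e y_e\big)^{2\a}\big(\sum_e y_e^{-\a}\big)\ge m^{1+\a}\big(\sum_e y_e\big)^{\a}\ge m^{1+\a}(m\d)^{\a}=\d^{\a}m^{2\a+1}$, as needed; taking the $(2\a)$-th root of the rearranged inequality yields the stated bound. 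For the equality case, if $G$ is $\d$-regular then $\D=\d$, $GA_1(G)=m$ by Proposition \ref{p:0}, and $M_1^{1-\a}(G)=2m\d^{-\a}$, whence the right-hand side also equals $m$ for every $\a$; conversely, if equality holds for some $\a>0$ then every inequality in the chain is tight, and already tightness of the first estimate forces $d_u+d_v=2\D$ — hence $d_u=d_v=\D$ — on every edge, so (as each vertex lies on an edge) $G$ is $\D$-regular.

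I do not expect a genuine obstacle here: once the substitution $y_e=\sqrt{d_u d_v}$ is made, the proof is a short sequence of elementary steps (the bound $d_u+d_v\le 2\D$, the identity $\sum_u d_u^{1-\a}=\sum_{uv\in E(G)}(d_u^{-\a}+d_v^{-\a})$, AM--GM, Jensen, and $y_e\ge\d$). The only point requiring a little care is the bookkeeping of the equality conditions, but this reduces entirely to the first, most wasteful, inequality $d_u+d_v\le 2\D$.
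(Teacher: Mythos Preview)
Your argument is correct. The chain
\[
GA_1(G)\ge \frac1\Delta\sum_e y_e,\qquad
M_1^{1-\alpha}(G)=\sum_{uv}(d_u^{-\alpha}+d_v^{-\alpha})\ge 2\sum_e y_e^{-\alpha},
\]
followed by Jensen for $x\mapsto x^{-\alpha}$ and the trivial bound $\sum_e y_e\ge m\delta$, does yield
$GA_1(G)^{2\alpha}\Delta^{2\alpha}M_1^{1-\alpha}(G)\ge 2\delta^{\alpha}m^{2\alpha+1}$,
which is exactly the stated inequality after taking the $(2\alpha)$-th root. Your treatment of the equality case is also sound: equality in the product forces equality at every link, and already the first one ($d_u+d_v=2\Delta$ on every edge) pins $G$ down as $\Delta$-regular; the converse is a direct computation.

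As for the comparison you asked about: the paper does not prove this lemma at all --- it is quoted from the external reference \cite{gaCadiz} and stated without argument --- so there is no ``paper's proof'' to set yours against. Your proof is therefore a self-contained addition rather than a reworking of anything in the text, and it uses only elementary tools (the crude bound $d_u+d_v\le 2\Delta$, the standard edge-reindexing identity for $M_1^{1-\alpha}$, AM--GM, and Jensen). One small cosmetic point: you might make explicit that multiplying the two lower bounds is legitimate because all quantities are positive (so $a\ge b>0$ and $c\ge d>0$ give $ac\ge bd$, with equality forcing $a=b$ and $c=d$); this is the hinge on which your equality analysis rests.
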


\begin{theorem} \label{c:pi1tris}
If $\a>0$ and $G$ is a non-trivial graph with $m$ edges, maximum degree $\D$ and minimum degree $\d$, then
$$
\begin{aligned}
GA_1(\mathcal{L}(G))
\ge \frac{ (2\d-2)^{1/2} \D^{(1-\a)/(2\a)} \big( M_1(G)-2m \big)^{(2\a+1)/(2\a)}}{4 \, (\D-1)^{(1+\a)/(2\a)}\chi_{_{1-\a}}(G)^{1/(2\a)}} \,,
& \qquad \text{ if } \, 0< \a \le 1,
\\
GA_1(\mathcal{L}(G))
\ge \frac{ \sqrt{2} \, (\d-1)^{1/(2\a)} \d^{(\a-1)/(2\a)} \big( M_1(G)-2m \big)^{(2\a+1)/(2\a)}}{4 \, (\D-1) \chi_{_{1-\a}}(G)^{1/(2\a)}} \,,
& \qquad \text{ if } \, \a > 1.
\end{aligned}
$$
\end{theorem}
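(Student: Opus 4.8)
The plan is to apply Lemma~\ref{t:pi1tris} to the line graph $\mathcal{L}(G)$ with the same exponent $\a$ and then rewrite every parameter of $\mathcal{L}(G)$ in terms of those of $G$. Recall (cf. the discussion preceding Proposition~\ref{p:line02}) that
$$
2m_{\mathcal{L}(G)}=M_1(G)-2m,
\qquad
\D_{\mathcal{L}(G)}\le 2\D-2=2(\D-1),
\qquad
\d_{\mathcal{L}(G)}\ge 2\d-2.
$$
If $\d=1$ the factor $(2\d-2)^{1/2}$ (respectively $(\d-1)^{1/(2\a)}$) in the claimed inequalities is $0$ and there is nothing to prove, so we may assume $\d\ge 2$; then $\d_{\mathcal{L}(G)}\ge 2\d-2\ge 2$, every vertex of $\mathcal{L}(G)$ has positive degree, and all quantities entering Lemma~\ref{t:pi1tris} for $\mathcal{L}(G)$ are well defined. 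No decomposition into connected components is needed, since Lemma~\ref{t:pi1tris} does not assume connectedness and the three identities above are simply sums over $E(G)$.

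Substituting $\d_{\mathcal{L}(G)}^{1/2}\ge(2\d-2)^{1/2}=\sqrt{2}\,(\d-1)^{1/2}$, $m_{\mathcal{L}(G)}=\tfrac12\big(M_1(G)-2m\big)$ and $\D_{\mathcal{L}(G)}\le 2(\D-1)$ into Lemma~\ref{t:pi1tris}, the powers of $2$ combine into the constant $\tfrac{\sqrt{2}}{4}$, giving
$$
GA_1(\mathcal{L}(G))\ \ge\ \frac{\sqrt{2}}{4}\cdot\frac{(\d-1)^{1/2}\,\big(M_1(G)-2m\big)^{(2\a+1)/(2\a)}}{(\D-1)\,M_1^{1-\a}(\mathcal{L}(G))^{1/(2\a)}}\,.
$$
All that remains is to bound $M_1^{1-\a}(\mathcal{L}(G))=\sum_{uv\in E(G)}(d_u+d_v-2)^{1-\a}$ from above by a multiple of $\chi_{_{1-\a}}(G)=\sum_{uv\in E(G)}(d_u+d_v)^{1-\a}$.

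Here the two cases split. For each $uv\in E(G)$ one has $2\d\le d_u+d_v\le 2\D$, hence
$$
\frac{\d-1}{\d}\,(d_u+d_v)\ \le\ d_u+d_v-2\ \le\ \frac{\D-1}{\D}\,(d_u+d_v).
$$
If $0<\a\le1$, then $1-\a\ge0$ and $t\mapsto t^{1-\a}$ increases, so the upper estimate gives $M_1^{1-\a}(\mathcal{L}(G))\le\big(\tfrac{\D-1}{\D}\big)^{1-\a}\chi_{_{1-\a}}(G)$; if $\a>1$, then $1-\a<0$ and $t\mapsto t^{1-\a}$ decreases, so the lower estimate gives $M_1^{1-\a}(\mathcal{L}(G))\le\big(\tfrac{\d-1}{\d}\big)^{1-\a}\chi_{_{1-\a}}(G)$. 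Inserting the relevant bound into the previous display, raising to the power $1/(2\a)$, and regrouping the factors --- using $1+\tfrac{1-\a}{2\a}=\tfrac{1+\a}{2\a}$, so that $(\D-1)\cdot(\D-1)^{(1-\a)/(2\a)}=(\D-1)^{(1+\a)/(2\a)}$ and $\D^{-(1-\a)/(2\a)}=\D^{(1-\a)/(2\a)}$, and symmetrically with $\d$ in the case $\a>1$ --- yields the two asserted lower bounds.

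I expect the only real work to lie in the bookkeeping of the fractional exponents: the sign change of $1-\a$ at $\a=1$ is exactly what forces the split into two cases and decides whether $\d$ or $\D$ governs the comparison of $M_1^{1-\a}(\mathcal{L}(G))$ with $\chi_{_{1-\a}}(G)$, and one must check that after raising to the power $1/(2\a)$ and dividing by $\D-1$ the leftover powers of $\d-1$, $\d$, $\D-1$, $\D$ match precisely those written in the statement. A convenient sanity check is to evaluate the bound on regular graphs, where $\mathcal{L}(G)$ is regular and equality holds in Lemma~\ref{t:pi1tris}.
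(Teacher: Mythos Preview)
Your approach is exactly the paper's: apply Lemma~\ref{t:pi1tris} to $\mathcal{L}(G)$, substitute the standard line-graph parameter bounds, then control $M_1^{1-\a}(\mathcal{L}(G))$ edge by edge via $\chi_{_{1-\a}}(G)$. For $0<\a\le 1$ your computation matches the paper's and proves the stated inequality (the line ``$\D^{-(1-\a)/(2\a)}=\D^{(1-\a)/(2\a)}$'' is only a slip of the pen; you clearly mean the denominator factor passes to the numerator).

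For $\a>1$, however, your claim that the bookkeeping ``symmetrically with $\d$'' reproduces the stated exponents is where the discrepancy lies. Your inequality $M_1^{1-\a}(\mathcal{L}(G))\le\big(\tfrac{\d-1}{\d}\big)^{1-\a}\chi_{_{1-\a}}(G)$ is the correct one; the paper writes the reciprocal factor $\big(\tfrac{\d}{\d-1}\big)^{1-\a}$, which is $<1$ for $\a>1$ and therefore false in general (already for $\d=2$, $\a=2$ and an edge with $d_u+d_v=4$ one has $(d_u+d_v-2)^{-1}=\tfrac12$ while $\big(\tfrac{\d}{\d-1}\big)^{-1}(d_u+d_v)^{-1}=\tfrac18$). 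Carrying your correct estimate through yields the lower bound
\[
\frac{\sqrt{2}\,(\d-1)^{(2\a-1)/(2\a)}\,\d^{(1-\a)/(2\a)}\,\big(M_1(G)-2m\big)^{(2\a+1)/(2\a)}}{4(\D-1)\,\chi_{_{1-\a}}(G)^{1/(2\a)}},
\]
with exponents $(\d-1)^{(2\a-1)/(2\a)}\d^{(1-\a)/(2\a)}$ rather than the printed $(\d-1)^{1/(2\a)}\d^{(\a-1)/(2\a)}$. Your own sanity check on regular graphs detects this: for $k$-regular $G$ your version collapses to $m(k-1)=GA_1(\mathcal{L}(G))$, whereas the printed bound gives $m(k-1)^{1/\a}k^{(\a-1)/\a}$, which is strictly larger for $\a>1$ and hence cannot be a valid lower bound. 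So the mismatch is an error in the paper for $\a>1$, not in your reasoning; your argument actually proves the corrected inequality above.
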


\begin{proof}
For any $\a>0$, Lemma \ref{t:pi1tris} gives
$$
GA_1(\mathcal{L}(G))
\ge \frac{2^{1/(2\a)} \d_{\mathcal{L}(G)}^{1/2}m_{\mathcal{L}(G)}^{(2\a+1)/(2\a)}}{\D_{\mathcal{L}(G)} \, M_1^{1-\a}(\mathcal{L}(G))^{1/(2\a)}}\,.
$$
Since
$\D_{\mathcal{L}(G)} \le 2\D-2$ and $\d_{\mathcal{L}(G)} \ge 2\d-2$, we have
$$
GA_1(\mathcal{L}(G))
\ge \frac{2^{1/(2\a)} (2\d-2)^{1/2}\big( \frac12\, M_1(G)-m \big)^{(2\a+1)/(2\a)}}{(2\D-2) M_1^{1-\a}(\mathcal{L}(G))^{1/(2\a)}}\,.
$$

If $0< \a \le 1$, then
$$
\begin{aligned}
M_1^{1-\a}(\mathcal{L}(G))
& = \sum_{uv \in V(\mathcal{L}(G))} d_{uv}^{1-\a}
= \sum_{uv \in E(G)} (d_u + d_{v}-2)^{1-\a}
\\
& \le \Big( \frac{\D-1}{\D} \Big)^{1-\a} \!\!\! \sum_{uv \in E(G)} (d_u + d_{v})^{1-\a}
= \Big( \frac{\D-1}{\D} \Big)^{1-\a} \chi_{_{1-\a}}(G),
\end{aligned}
$$
and we conclude
$$
\begin{aligned}
GA_1(\mathcal{L}(G))
& \ge \frac{ (2\d-2)^{1/2}\big( M_1(G)-2m \big)^{(2\a+1)/(2\a)}}{(4\D-4) \, \big( \big( \frac{\D-1}{\D} \big)^{1-\a}\chi_{_{1-\a}}(G)\big)^{1/(2\a)}}
\\
& = \frac{ (2\d-2)^{1/2} \D^{(1-\a)/(2\a)} \big( M_1(G)-2m \big)^{(2\a+1)/(2\a)}}{4 \, (\D-1)^{(1+\a)/(2\a)}\chi_{_{1-\a}}(G)^{1/(2\a)}}
\,.
\end{aligned}
$$

If $\a > 1$, then
$$
\begin{aligned}
M_1^{1-\a}(\mathcal{L}(G))
& = \sum_{uv \in V(\mathcal{L}(G))} d_{uv}^{1-\a}
= \sum_{uv \in E(G)} (d_u + d_{v}-2)^{1-\a}
\\
& \le \Big( \frac{\d}{\d-1} \Big)^{1-\a} \!\!\! \sum_{uv \in E(G)} (d_u + d_{v})^{1-\a}
= \Big( \frac{\d}{\d-1} \Big)^{1-\a} \chi_{_{1-\a}}(G),
\end{aligned}
$$
and we have
$$
\begin{aligned}
GA_1(\mathcal{L}(G))
& \ge \frac{ \sqrt{2} \, (\d-1)^{1/2}\big( M_1(G)-2m \big)^{(2\a+1)/(2\a)}}{4 \,(\D-1) \, \big( \big( \frac{\d}{\d-1} \big)^{1-\a}\chi_{_{1-\a}}(G)\big)^{1/(2\a)}}
\\
& = \frac{ \sqrt{2} \, (\d-1)^{1/(2\a)} \d^{(\a-1)/(2\a)} \big( M_1(G)-2m \big)^{(2\a+1)/(2\a)}}{4 \, (\D-1) \chi_{_{1-\a}}(G)^{1/(2\a)}}
\,.
\end{aligned}
$$
\end{proof}

\begin{theorem} \label{t:gam20}
If $G$ is a graph with $m$ edges and minimum degree $\d$, then
$$
GA_1(G)
\ge \frac{2 \, \d^{1/2} m^{2}}{M_1^{3/2}(G)} \,,
$$
and the equality is attained if and only if $G$ is regular.
\end{theorem}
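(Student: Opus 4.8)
The plan is to obtain the inequality from one termwise degree estimate followed by a single application of the Cauchy--Schwarz inequality, using only tools already at hand (throughout, $M_1^{3/2}(G)=\sum_{u\in V(G)}d_u^{3/2}$ is the first variable Zagreb index, in the notation of Lemma \ref{t:pi1tris}). For the termwise step I would rewrite each edge contribution as
$$
\frac{2\sqrt{d_u d_v}}{d_u+d_v}=\frac{2}{\sqrt{d_u/d_v}+\sqrt{d_v/d_u}}\,,
$$
and then use $d_u,d_v\ge\d$, which gives $\sqrt{d_u/d_v}\le d_u^{1/2}/\d^{1/2}$ and $\sqrt{d_v/d_u}\le d_v^{1/2}/\d^{1/2}$, hence
$$
\frac{2\sqrt{d_u d_v}}{d_u+d_v}\ \ge\ \frac{2\,\d^{1/2}}{d_u^{1/2}+d_v^{1/2}}\,,
$$
with equality in this single estimate precisely when $d_u=d_v=\d$. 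Summing over $E(G)$ gives $GA_1(G)\ge 2\,\d^{1/2}\sum_{uv\in E(G)}(d_u^{1/2}+d_v^{1/2})^{-1}$.

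For the second step, write $a_{uv}=d_u^{1/2}+d_v^{1/2}$; applying Cauchy--Schwarz to $m=\sum_{uv\in E(G)}a_{uv}^{-1/2}a_{uv}^{1/2}$ gives
$$
m^2\ \le\ \Big(\sum_{uv\in E(G)}a_{uv}^{-1}\Big)\Big(\sum_{uv\in E(G)}a_{uv}\Big)\,.
$$
Since each vertex $u$ contributes the value $d_u^{1/2}$ to exactly $d_u$ of the summands of $\sum_{uv\in E(G)}a_{uv}$, that sum equals $\sum_{u\in V(G)}d_u\cdot d_u^{1/2}=M_1^{3/2}(G)$. Therefore $\sum_{uv\in E(G)}a_{uv}^{-1}\ge m^2/M_1^{3/2}(G)$, and combining this with the termwise bound yields $GA_1(G)\ge 2\,\d^{1/2}m^2/M_1^{3/2}(G)$, which is the asserted inequality.

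For the equality statement I would note that equality in the whole chain forces equality in every termwise estimate, i.e.\ $d_u=d_v=\d$ for all $uv\in E(G)$, which is precisely the condition that $G$ be $\d$-regular; in that case the Cauchy--Schwarz step is automatically an equality too. Conversely, if $G$ is regular then every summand of $GA_1$ equals $1$, so $GA_1(G)=m$, whereas $M_1^{3/2}(G)=2\,\d^{1/2}m$, and both sides of the inequality coincide. Hence equality holds if and only if $G$ is regular.

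The only genuine choice in the argument is what to pair with $GA_1(G)$ in Cauchy--Schwarz; once one replaces each $GA_1$-summand by the lower bound $2\,\d^{1/2}(d_u^{1/2}+d_v^{1/2})^{-1}$, the conjugate sum $\sum_{uv\in E(G)}(d_u^{1/2}+d_v^{1/2})$ collapses by double counting into $\sum_{u}d_u^{3/2}=M_1^{3/2}(G)$, and the rest is routine. The one point to watch is notational: $M_1^{3/2}(G)$ here stands for $\sum_u d_u^{3/2}$, not for a power of the first Zagreb index $M_1(G)$.
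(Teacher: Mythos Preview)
Your proof is correct and is essentially the paper's own argument with the two steps reversed: the paper applies Cauchy--Schwarz first to $m=\sum\big(\frac{\sqrt{d_ud_v}}{d_u+d_v}\big)^{1/2}\big(\frac{d_u+d_v}{\sqrt{d_ud_v}}\big)^{1/2}$ and then uses the same termwise estimate $\sqrt{d_u/d_v}+\sqrt{d_v/d_u}\le \d^{-1/2}(d_u^{1/2}+d_v^{1/2})$ together with the double-counting identity $\sum_{uv\in E(G)}(d_u^{1/2}+d_v^{1/2})=M_1^{3/2}(G)$, whereas you bound termwise first and then apply Cauchy--Schwarz to $a_{uv}=d_u^{1/2}+d_v^{1/2}$. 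The ingredients, the resulting bound, and the equality analysis coincide.
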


\begin{proof}
Cauchy-Schwarz inequality gives
$$
\begin{aligned}
m
& = \sum_{uv\in E(G)} \Big(\frac{\sqrt{d_u d_v}}{d_u+d_v} \Big)^{1/2} \Big(\frac{d_u+d_v}{\sqrt{d_u d_v}} \Big)^{1/2}
\\
& \le \Big(\sum_{uv\in E(G)}\frac{\sqrt{d_u d_v}}{d_u+d_v} \Big)^{1/2} \Big(\sum_{uv\in E(G)}\Big(\sqrt{\frac{d_u}{d_v}} + \sqrt{\frac{d_v}{d_u}} \; \Big) \Big)^{1/2},
\end{aligned}
$$
Since
$$
\begin{aligned}
\sum_{uv\in E(G)}\Big(\sqrt{\frac{d_u}{d_v}} + \sqrt{\frac{d_v}{d_u}} \; \Big)
& \le \sum_{uv\in E(G)} \frac{d_u^{1/2} + d_v^{1/2}}{\d^{1/2}}
= \d^{-1/2} \sum_{uv\in E(G)} \big( d_u^{1/2} + d_v^{1/2} \big)
\\
& = \d^{-1/2} \sum_{u\in V(G)} d_u^{1/2}d_u
= \d^{-1/2} M_1^{3/2}(G),
\end{aligned}
$$
we have
$$
\begin{aligned}
m^2
& \le \frac1{2} \, GA_1(G) \Big( \d^{-1/2} M_1^{3/2}(G) \Big) ,
\\
GA_1(G)
& \ge \frac{2\,\d^{1/2}m^{2}}{M_1^{3/2}(G)} \,.
\end{aligned}
$$


If the graph is regular, then
$$
\frac{2\d^{1/2}m^{2}}{M_1^{3/2}(G)}
= \frac{2\d^{1/2}m^{2}}{\d^{3/2} n}
= \frac{2m^{2}}{\d n}
= m
= GA_1(G).
$$
\indent
If the equality is attained, then
the previous argument gives $d_v =\d$ for every $v \in V(G)$, and $G$ is a regular graph.
\end{proof}


\begin{theorem} \label{c:gam20bislinea}
If $G$ is a non-trivial graph with $m$ edges, maximum degree $\D$ and minimum degree $\d$, then
$$
GA_1(\mathcal{L}(G))
\ge \frac{(2\d-2)^{1/2} {\D}^{3/2} \big( M_1(G)-2m \big)^{2}}{2 (\D-1)^{3/2}\chi_{_{3/2}}(G)} \,.
$$
\end{theorem}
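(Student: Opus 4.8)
The plan is to apply Theorem~\ref{t:gam20} to the line graph $\mathcal{L}(G)$ and then rewrite the three quantities on the resulting right-hand side---namely $m_{\mathcal{L}(G)}$, $\d_{\mathcal{L}(G)}$ and the variable Zagreb index $M_1^{3/2}(\mathcal{L}(G))=\sum_{a\in V(\mathcal{L}(G))}d_a^{3/2}$---in terms of invariants of $G$. Recall that $d_{uv}=d_u+d_v-2$ for every $uv\in E(G)=V(\mathcal{L}(G))$; hence $2m_{\mathcal{L}(G)}=\sum_{uv\in E(G)}(d_u+d_v-2)=M_1(G)-2m$, an identity already used in the proof of Proposition~\ref{p:line02}, and $\d_{\mathcal{L}(G)}\ge 2\d-2$.

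Feeding these into Theorem~\ref{t:gam20} applied to $\mathcal{L}(G)$ gives
$$
GA_1(\mathcal{L}(G))
\ge \frac{2\,\d_{\mathcal{L}(G)}^{1/2}\,m_{\mathcal{L}(G)}^{2}}{M_1^{3/2}(\mathcal{L}(G))}
\ge \frac{(2\d-2)^{1/2}\,\big(M_1(G)-2m\big)^{2}}{2\,M_1^{3/2}(\mathcal{L}(G))}\,.
$$
Since $G$ is non-trivial we have $\D\ge 2$, so $(\D-1)^{3/2}>0$; and if $\d=1$ the asserted bound is $0$ while $GA_1(\mathcal{L}(G))\ge 0$, so one may assume $\d\ge 2$ and the fractional powers of $\d-1$ and $\D-1$ cause no trouble.

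Next I would bound $M_1^{3/2}(\mathcal{L}(G))=\sum_{uv\in E(G)}(d_u+d_v-2)^{3/2}$ from above. The elementary estimate $d_u+d_v-2\le\frac{\D-1}{\D}(d_u+d_v)$---valid because $d_u+d_v\le 2\D$, and exactly the device used in the proof of Theorem~\ref{c:pi1tris} for $0<\a\le1$---yields, after raising to the power $3/2$,
$$
M_1^{3/2}(\mathcal{L}(G))
\le \Big(\frac{\D-1}{\D}\Big)^{3/2}\sum_{uv\in E(G)}(d_u+d_v)^{3/2}
= \Big(\frac{\D-1}{\D}\Big)^{3/2}\chi_{_{3/2}}(G)\,.
$$
Substituting this into the previous display and simplifying the constant $(2\d-2)^{1/2}\big/\big(2(\tfrac{\D-1}{\D})^{3/2}\big)=(2\d-2)^{1/2}\D^{3/2}\big/\big(2(\D-1)^{3/2}\big)$ gives precisely
$$
GA_1(\mathcal{L}(G))
\ge \frac{(2\d-2)^{1/2}\,\D^{3/2}\,\big(M_1(G)-2m\big)^{2}}{2\,(\D-1)^{3/2}\,\chi_{_{3/2}}(G)}\,,
$$
which is the claim.

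I do not anticipate any genuine obstacle: the proof is a direct transfer of Theorem~\ref{t:gam20} along the degree dictionary $d_{uv}=d_u+d_v-2$, the only ingredients being the identity $2m_{\mathcal{L}(G)}=M_1(G)-2m$, the bound $\d_{\mathcal{L}(G)}\ge 2\d-2$, and the per-edge inequality $d_u+d_v-2\le\frac{\D-1}{\D}(d_u+d_v)$. The sole point needing a moment's care is noting that the case $\d=1$ makes the stated inequality vacuous, so that the appearance of $(\d-1)$ and $(\D-1)$ to fractional powers is harmless.
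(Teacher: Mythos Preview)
Your proposal is correct and follows essentially the same route as the paper: apply Theorem~\ref{t:gam20} to $\mathcal{L}(G)$, use $\d_{\mathcal{L}(G)}\ge 2\d-2$ and $2m_{\mathcal{L}(G)}=M_1(G)-2m$, and bound $M_1^{3/2}(\mathcal{L}(G))$ via the per-edge inequality $d_u+d_v-2\le\frac{\D-1}{\D}(d_u+d_v)$. Your explicit remark that the case $\d=1$ is vacuous is a small refinement not present in the paper's version.
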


\begin{proof}
Theorem \ref{t:gam20} and $\d_{\mathcal{L}(G)} \ge 2\d-2$ give
$$
GA_1(\mathcal{L}(G))
\ge \frac{2 \, \d_{\mathcal{L}(G)}^{1/2} m_{\mathcal{L}(G)}^{2}}{M_1^{3/2}(\mathcal{L}(G))}
\ge \frac{2 \, (2\d-2)^{1/2} \big( \frac12\, M_1(G)-m \big)^{2}}{M_1^{3/2}(\mathcal{L}(G))} \,.
$$
Since
$$
\begin{aligned}
M_1^{3/2}(\mathcal{L}(G))
& = \sum_{uv \in V(\mathcal{L}(G))} d_{uv}^{3/2}
= \sum_{uv \in E(G)} (d_u + d_{v}-2)^{3/2}
\\
& \le \Big( \frac{\D-1}{\D} \Big)^{3/2} \!\!\! \sum_{uv \in E(G)} (d_u + d_{v})^{3/2}
= \Big( \frac{\D-1}{\D} \Big)^{3/2} \chi_{_{3/2}}(G),
\end{aligned}
$$
we conclude
$$
GA_1(\mathcal{L}(G))
\ge \frac{(2\d-2)^{1/2} \big( M_1(G)-2m \big)^{2}}{2 \big( \frac{\D-1}{\D} \big)^{3/2}\chi_{_{3/2}}(G)} \,.
$$
\end{proof}

\end{document}